\documentclass[12pt,a4paper]{amsart}
\usepackage{amsmath,amssymb,color,setspace}
\usepackage{hyperref}
\usepackage[utf8,utf8x]{inputenc}
\usepackage{fullpage}

\usepackage{array}
\usepackage{epsfig}
\usepackage{xspace}
\usepackage{color}

\usepackage{enumitem}

\newtheorem{propo}{Proposition}[section]

\newtheorem{theor}[propo]{Theorem}
\newtheorem{lemma}[propo]{Lemma}

\newtheorem{question}[propo]{Question}
\newtheorem*{theor*}{Theorem}

\theoremstyle{definition}
\newtheorem{defin}[propo]{Definition}
\newtheorem{examp}[propo]{Example}

\theoremstyle{remark}
\newtheorem{remar}[propo]{Remark}

\newcommand{\NN }{\mathbb{N}}

\newcommand{\FF }{\mathbb{F}}
\newcommand{\QQ }{\mathbb{Q}}
\newcommand{\ZZ }{\mathbb{Z}}
\newcommand{\PP }{\mathbb{P}}

\newcommand{\DR }{R}
\newcommand{\KR }{K}
\DeclareMathOperator{\Gr}{Gr_{\it \KR}}
\DeclareMathOperator{\Quot}{Quot}
\newcommand{\AC }{\mathcal{A}}
\DeclareMathOperator{\SL}{SL}

\DeclareMathOperator{\sgn}{sgn  }

\newcommand{\pp}{q}

\DeclareMathOperator{\rank}{rank}
\DeclareMathOperator{\rk}{rk}
\newcommand{\MC }{\mathcal{M}}
\newcommand{\yy }{\omega}

\definecolor{darkgreen}{rgb}{0.0,0.1,0.6}
\newcommand{\df}[1]{{\bf\color{darkgreen} #1}}

\title[Grassmannians over rings and subpolygons]
{Grassmannians over rings and subpolygons}

\author{Michael~Cuntz}
\address{Michael Cuntz, Leibniz Universit\"at Hannover,
Institut f\"ur Algebra, Zah\-lentheorie und Diskrete Mathematik,
Fakult\"at f\"ur Mathematik und Physik,
Wel\-fengarten 1,
D-30167 Hannover, Germany}
\email{cuntz@math.uni-hannover.de}

\begin{document}

\keywords{Frieze pattern, cluster algebra, Grassmannian, Weyl groupoid}
\subjclass[2020]{05E99, 13F60, 51M20, 20F55}

\begin{abstract}
We investigate special points on the Grassmannian which correspond to friezes with coefficients in the case of rank two. Using representations of arithmetic matroids we obtain a theorem on subpolygons of specializations of the coordinate ring. As a special case we recover the characterization of subpolygons in classic frieze patterns. Moreover, we observe that specializing clusters of the coordinate ring of the Grassmannian to units yields representations that may be interpreted as arrangements of hyperplanes with notable properties. In particular, we get an interpretation of certain Weyl groups and groupoids as generalized frieze patterns.
\end{abstract}

\maketitle

\section{Introduction}

The coordinate ring of a Grassmannian $\Gr(k,n)$ is known to have a cluster structure \cite{MR2205721}.
When $k=2$, the clusters correspond to triangulations of an $n$-gon by non-intersecting diagonals.
Under this correspondence, Pl\"ucker coordinates $p_{i,j}$ are edges and diagonals between vertices $i,j$; they satisfy the so-called Ptolemy relations. Specializing all variables of a cluster to some values in a commutative ring $\DR$ we obtain a map
from the set of diagonals and edges of the $n$-gon to $\DR$.
Coxeter mentioned such a map in \cite{MR1119304},
in \cite{MR4080476} we call it a \df{frieze with coefficients}; it is a frieze pattern in the classical sense when the frozen variables (the edges of the $n$-gon) are mapped to $1$ \cite{MR2250855}.

By the Laurent phenomenon, specializing all variables of a cluster to $1$ gives a frieze with values in $\ZZ_{>0}$, it is then called a \df{Conway-Coxeter frieze}. Restricting the frieze to a subpolygon yields a frieze with values in the same ring. However, the frozen variables are not necessarily specialized to $1$ since the edges of the subpolygon were originally diagonals. In \cite{MR4080476} we raise the question to describe friezes with integer values that are obtained from subpolygons in Conway-Coxeter frieze patterns.
We give an answer to the case of triangles which is somewhat surprising at first sight:

\begin{theor*}[{\cite[Thm.\ 5.12]{MR4080476}}]
\label{thm:triangle}
A triple $(a,b,c)\in\NN^3$ appears as labels of a triangle
in some Conway-Coxeter frieze if and only if the following two conditions are
satisfied:
\begin{enumerate}
\item $\gcd(a,b)=\gcd(b,c)=\gcd(a,c)$,
\item $\nu_2(a)=\nu_2(b)=\nu_2(c)=0$ \quad or\quad $|\{\nu_2(a),\nu_2(b),\nu_2(c)|>1$
\end{enumerate}
where $\nu_2(\cdot )$ denotes the 2-valuation.
\end{theor*}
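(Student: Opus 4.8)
The plan is to translate the geometric condition into a representability question for an arithmetic matroid and then settle that question by elementary $2$-adic arithmetic. The labels of a triangle with vertices $i,j,k$ in a Conway--Coxeter frieze are the three Pl\"ucker coordinates, which (after fixing orientations) equal $a=\det(u,v)$, $b=\det(v,w)$, $c=\det(w,u)$, where $u,v,w\in\ZZ^2$ are the vectors attached to $i,j,k$; since consecutive frieze vectors form an $\SL_2(\ZZ)$-basis, each of $u,v,w$ is primitive. I would first record the standard fact that, conversely, any triple of pairwise independent primitive vectors $u,v,w\in\ZZ^2$ that is cyclically positively oriented can be completed to a Conway--Coxeter frieze by inserting Farey mediants along each of the three arcs $u\to v$, $v\to w$, $w\to u$; the forced linear relation among three such vectors has coefficients proportional to $(b,c,a)$, so positive orientation makes these coefficients positive and the vectors genuinely surround the origin. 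This reduces the theorem to the arithmetic statement that primitive $u,v,w\in\ZZ^2$ with $|\det(u,v)|,|\det(v,w)|,|\det(w,u)|$ equal to $a,b,c$ exist if and only if (1) and (2) hold; the reduction is exactly the subpolygon correspondence furnished by representing the underlying rank-two arithmetic matroid on three elements, with multiplicities $1,1,1$ on points, $a,b,c$ on pairs, and $\gcd(a,b,c)$ on the whole set.

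Next I would normalize $u=(1,0)$ by the $\SL_2(\ZZ)$-action, so that $v=(x,a)$ and $w=(p,c)$, whereupon the remaining determinant condition becomes the single linear Diophantine equation $xc-pa=\pm b$, while primitivity of $v,w$ becomes $\gcd(x,a)=\gcd(p,c)=1$. Solvability of $xc-pa=\pm b$ in integers is equivalent to $\gcd(a,c)\mid b$; running the same normalization through the three cyclic roles shows the system is solvable at all precisely when $\gcd(a,c)\mid b$, $\gcd(a,b)\mid c$ and $\gcd(b,c)\mid a$, and a short check shows this conjunction is equivalent to condition (1), that is, to $\gcd(a,b)=\gcd(b,c)=\gcd(a,c)$. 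Writing $g$ for this common value and $a=ga'$, $b=gb'$, $c=gc'$, condition (1) says exactly that $a',b',c'$ are pairwise coprime.

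It then remains to decide when a solution can in addition satisfy the coprimality constraints. Dividing by $g$ gives $xc'-pa'=\pm b'$ with general solution $x=x_0+a't$, $p=p_0+c't$; reducing modulo $a'$ and $c'$ shows $\gcd(x,a')=\gcd(p,c')=1$ hold automatically, so the only real constraints are $\gcd(x,g)=\gcd(p,g)=1$, which I would analyze prime by prime in $t\bmod\ell$ for each $\ell\mid g$ via the Chinese Remainder Theorem. For odd $\ell$ there are always enough residues of $t$ to dodge both forbidden classes, and the same holds at $\ell=2$ unless $2\mid g$ and $a',c'$ are both odd. This is the crux of the argument and the point where condition (2) enters: when $2\mid g$ and all of $a',b',c'$ are odd, dividing $xc-pa=\pm b$ by the common $2$-valuation forces an identity $xc''-pa''=\pm b''$ with $a'',b'',c''$ odd, in which primitivity compels $x,p$ to be odd, so the left-hand side is even while the right-hand side is odd --- a contradiction, and no representation exists. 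Conversely, if (2) holds then either $g$ is odd, or the unique even member among $a',b',c'$ removes the obstruction at $2$ (either by fixing the parity of one variable, or by making the two conditions on $t\bmod 2$ coincide), and the CRT selection produces the desired $t$. Translating ``all $a',b',c'$ odd with $g$ even'' back into $2$-valuations gives exactly the excluded case $\nu_2(a)=\nu_2(b)=\nu_2(c)>0$, so the surviving cases are precisely those of condition (2). The main obstacle throughout is this $\ell=2$ parity analysis; everything else is either standard frieze combinatorics or routine solvability of linear congruences.
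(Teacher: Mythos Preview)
Your argument is correct in substance and constitutes a genuine, self-contained proof of the triangle case. There is some sign sloppiness: in a Conway--Coxeter frieze one has $\det(x_i,x_j)>0$ for $i<j$, so for a subtriangle $u,v,w$ the linear relation reads $b\,u - c\,v + a\,w = 0$ (coefficients $(b,-c,a)$, not $(b,c,a)$), and $v$ sits in the positive cone spanned by $u$ and $w$ rather than the three vectors surrounding the origin; the Farey completion then closes up by passing from $w$ to $-u$. None of this affects the Diophantine core, which you carry out correctly, including the key observation that at $\ell=2$ the two forbidden residues of $t$ coincide precisely when $b'$ is even.

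Your route is, however, quite different from the one the paper takes. Here the triangle theorem is not proved directly but recovered as the specialization $k=2$, $n=3$, $R=\ZZ$ of Theorem~\ref{thm:main} together with Theorem~\ref{thm:slk}. In that framework, condition~(1) is the equality of the ideals $(p_{i,j}\mid j\neq i)$, and condition~(2) arises from reducing a representation modulo each prime $\pp\mid\varepsilon$ and asking whether the resulting points fill up $\PP(\FF_\pp)^2$; since $|\PP(\FF_\pp)^2|=\pp+1$, only $\pp=2$ can ever cause trouble when $n=3$, which is the conceptual explanation for the $2$-valuation. The extension to a frieze is then the general Hermite-normal-form/cone argument of Theorem~\ref{thm:slk} rather than the explicit Farey mediant insertion. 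Your approach is more elementary and gives a quick stand-alone proof; the paper's approach is uniform in $k$ and $n$ and makes transparent why the prime $2$ (and more generally primes $\pp$ with $\pp+1\le n$) is singled out.
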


There is this strange appearance of $2$-valuations which was elucidated in our more general result:

\begin{theor*}[{\cite[Thm.\ 3.2]{MR4311087}}]
Let $\mathcal{C}$ be a frieze with coefficients 
on an $n$-gon over positive integers. Then $\mathcal{C}$ appears as a subpolygon
of some Conway-Coxeter frieze if and only if the following conditions are
satisfied:
\begin{enumerate}
\item \label{cond0:gcd}
For any triangle $(a,b,c)$ in $\mathcal{C}$ we have
$\gcd(a,b)=\gcd(b,c)=\gcd(a,c)$.
\item \label{cond0:p+1}
Let $p<n$ be a prime number. Then for each $(p+1)$-subpolygon $\mathcal{D}$
of $\mathcal{C}$ the labels of edges and diagonals in $\mathcal{D}$ are either
all not divisible by $p$ or they do not all have the same $p$-valuation.
\end{enumerate}
\end{theor*}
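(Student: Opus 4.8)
The plan is to translate the statement into the language of integral vector configurations. Up to the $\GL_2$-action a Conway--Coxeter frieze on the $N$-gon is a cyclic family of primitive vectors $v_1,\dots,v_N\in\ZZ^2$ in convex position with $\det(v_i,v_{i+1})=1$ for consecutive indices, the label of $\{i,j\}$ being $\det(v_i,v_j)$; passing to a subpolygon is passing to a subfamily. Thus $\mathcal{C}$ is a subpolygon of some Conway--Coxeter frieze if and only if there are primitive vectors $w_1,\dots,w_n\in\ZZ^2$ with $\det(w_i,w_j)$ equal to the prescribed labels which, in the induced cyclic order, embed into a unimodular convex configuration. The two tools I would use throughout are the primitivity of these vectors and the Plücker three-term relation $\det(w_j,w_k)\,w_i-\det(w_i,w_k)\,w_j+\det(w_i,w_j)\,w_k=0$, arguing one prime $p$ at a time by reduction modulo $p$.

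For necessity I first observe that (1) is equivalent to saying that for every prime $p$ the minimum of the three valuations $\nu_p$ on a triangle is attained at least twice. If it were attained only once, say the edge $\{j,k\}$ had the strict minimum $t$, then dividing the Plücker relation by $p^t$ and reducing modulo $p$ would give $(\det(w_j,w_k)/p^t)\,\bar w_i\equiv 0$ with a unit coefficient, forcing $\bar w_i=0$ and contradicting the primitivity of $w_i$; hence (1) is necessary. For (2), suppose all labels of a $(p+1)$-subpolygon are divisible by $p$. Then the corresponding reductions $\bar w_a$ all coincide in $\PP^1(\FF_p)$, so after a change of basis $w_a=(x_a,y_a)$ with $p\nmid x_a$ and $p\mid y_a$, and $\nu_p(\det(w_a,w_b))=\nu_p(\theta_b-\theta_a)$ for $\theta_a:=y_a/x_a\in p\ZZ_{(p)}$. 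Were all these valuations equal to some $t$, the $p+1$ elements $\theta_a$ would be congruent modulo $p^t$ but pairwise distinct modulo $p^{t+1}$; since only $p$ residues modulo $p^{t+1}$ lie above a fixed residue modulo $p^t$, this is impossible by pigeonhole. This is the conceptual source of the number $p+1$: it is the cardinality of $\PP^1(\FF_p)$.

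The substantial direction is sufficiency, and here the plan has two stages. First, realize $\mathcal{C}$ by primitive integral vectors: constructing $w_1,w_2,\dots$ inductively, each new vector is determined over $\QQ$ by the Plücker relations, and condition (1) on the triangles through it is precisely what makes its coordinates integral and the vector primitive. This stage amounts to the representability over $\ZZ$ of the rank-two arithmetic matroid attached to $\mathcal{C}$, for which I would invoke the matroid machinery referred to above. Second, between consecutive $w_i,w_{i+1}$ I would insert the convex unimodular fan given by the Stern--Brocot (continued-fraction) subdivision of $\det(w_i,w_{i+1})$, producing a unimodular convex configuration that closes up into a Conway--Coxeter frieze containing $\mathcal{C}$ as a subpolygon.

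I expect the main obstacle to be that these two stages interact: the congruence classes of the primitive vectors modulo the powers of $p$ are not free but must be simultaneously compatible with being suitably spread across the $p+1$ points of $\PP^1(\FF_p)$, and it is exactly condition (2), read as the assertion that there is enough room among those $p+1$ points, that removes the $p$-adic obstruction to placing the vectors and hence to closing the fan. I would therefore organize sufficiency as a local-to-global argument: realize $\mathcal{C}$ over each $\ZZ_p$ using (1) and (2), glue to a global primitive representation, and then complete to a unimodular configuration. The crux is to verify that (1) and (2) leave no further higher obstruction, i.e.\ that the only constraints are those visible on triangles and on $(p+1)$-gons.
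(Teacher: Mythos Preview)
Your necessity arguments are correct and align with the paper: the Pl\"ucker three-term relation forces the minimum $p$-valuation in any triangle to be attained at least twice (this is Proposition~\ref{gcd_cond} for $k=2$), and the pigeonhole in $\PP^1(\FF_p)$ is precisely the mechanism in the converse direction of Theorem~\ref{thm:main}.

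The gap is in sufficiency. You write that ``condition~(1) on the triangles through it is precisely what makes its coordinates integral and the vector primitive,'' and relegate condition~(2) to an obstruction to ``closing the fan'' in Stage~2. This is the wrong allocation. Condition~(1) alone does \emph{not} yield a primitive representation: the paper's example $\psi(t_{1,2})=\psi(t_{2,3})=\psi(t_{1,3})=2$ satisfies condition~(1) (all pairwise gcds equal $2$) yet admits no representation by primitive vectors, because the three reductions would have to be three distinct points of $\PP^1(\FF_2)$ while still leaving one direction free, and $|\PP^1(\FF_2)|=3$ makes this impossible. So condition~(2) is already needed at Stage~1. The paper's decomposition is: conditions~(1) and~(2) together are \emph{equivalent} to the existence of a primitive representation (Theorem~\ref{thm:main}; the construction divides out the global gcd, reduces modulo each prime $\pp$ dividing $\varepsilon$, uses~(2) to find a direction in $(\DR/(\pp))^k$ avoided by all columns, and lifts via the Chinese remainder theorem), and once primitive vectors with positive minors are in hand, the Euclidean/Stern--Brocot insertion (the $k=2$ case of Theorem~\ref{thm:slk}) works with no further hypothesis. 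There is therefore no interaction between the two stages and no $p$-adic obstruction to closing the fan once Stage~1 is complete. Your local-to-global sketch, if carried out, would have to rediscover Theorem~\ref{thm:main}; but the plan as stated---use only~(1) for Stage~1---fails already on the example above.
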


This explains why $2$-valuations play a role in the case of triangles.
However, the conditions still look mysterious.
In this paper, we generalize the question to $\SL_k$-friezes, viewed as in \cite{MR3264259} as points on arbitrary Grassmannians\footnote{arbitrary $k$ and almost arbitrary ring of coefficients}.
We call such a generalized frieze a \df{specialization} of the coordinate ring $\AC_{k,n}$.
It turns out that the $p$-valuations yet have a better explanation:
the $p+1$-subpolygons of the previous theorem have $p+1$ vertices
because this is the number of elements in the projective line over $\FF_p$.

\begin{theor*}[Thm.\ \ref{thm:main}]
Let $\DR$ be a principal ideal domain and
$\psi$ be a specialization of $\AC_{k,n}$ such that
\[ \psi(t_{i_1,\ldots,i_k})\in \DR\setminus\{0\} \]
for all $1\le i_1<\ldots<i_k \le n$.
Let $\varepsilon\in R$ be such that
$$(\varepsilon)=(\psi(t_{i_1,\ldots,i_k}) \mid 1\le i_1 < \ldots < i_k \le n).$$
We write $(\varepsilon)=(\pp_1^{d_1})\cap\cdots\cap(\pp_e^{d_e})$ for primes $\pp_1,\ldots,\pp_e$.

Then $\psi$ has a representation
$X=(x_1 \cdots x_n)\in \DR^{k\times n}$
such that\footnote{$|x|$ denotes the greatest common divisor of the coordinates of $x$.} $|x_i|=1$ for all $1\le i \le n$ if and only if the following conditions are satisfied:
\begin{enumerate}
\item For all $1\le i \le n$, the ideals
\[ (\psi(t_{i,i_2,\ldots,i_k}) \mid 1\le i_2 < \ldots < i_k \le n)
= (\varepsilon) \]
coincide.
\item For each $\ell=1,\ldots,e$ and $S\subseteq \{1,\ldots,n\}$,
\[ |S|=|\PP (R/(\pp_\ell))^k| \quad \Longrightarrow \quad
\exists i,j \in S \:\:
\forall i_1,\ldots,i_{k-2} \:\::\:\: \nu_{\pp_\ell}(\psi(t_{i_1,\ldots,i_{k-2},i,j}) )\ne \nu_{\pp_\ell}(\varepsilon). \]
\end{enumerate}
\end{theor*}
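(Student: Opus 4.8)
The plan is to set up a dictionary and then prove the two implications after reducing to one prime at a time. A specialization $\psi$ with all $\psi(t_{i_1,\ldots,i_k})\neq 0$ is the same datum as a matrix $X=(x_1\cdots x_n)\in \DR^{k\times n}$ (well defined up to the left $\SL_k(\DR)$-action, which preserves all maximal minors) whose $k\times k$ minors are the values $\psi(t_{i_1,\ldots,i_k})$; here $\varepsilon$ is the content of the vector of maximal minors (the $k$-th determinantal divisor of $X$), and the condition $|x_i|=1$ means exactly that every column is a primitive (unimodular) vector. Since $\DR$ is a principal ideal domain, being primitive is a local condition at the finitely many primes $\pp_1,\ldots,\pp_e$ (away from them every column is automatically primitive); likewise Condition (1), read as $\nu_{\pp_\ell}$ of the per-index gcd equals $\nu_{\pp_\ell}(\varepsilon)$ for every $\ell$, and Condition (2) are both stated prime-by-prime. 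So I would first reduce the whole statement, via localization at each $\pp_\ell$ and recombination by the Chinese Remainder Theorem, to the case where $\DR$ is a discrete valuation ring with uniformizer $\pp=\pp_\ell$, residue field $\FF$, and $N:=|\PP(\DR/(\pp))^k|=(|\FF|^{k}-1)/(|\FF|-1)$; all later counting then takes place in $\PP^{k-1}(\FF)$.

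For necessity, assume $X$ has primitive columns. Condition (1) is the cleaner half: fixing $i$ and using primitivity of $x_i$, I apply an element of $\GL_k$ carrying $x_i$ to the first basis vector, so that the minors through column $i$ become the maximal $(k-1)\times(k-1)$ minors of the complementary block $X'$ obtained by deleting column $i$ and the first row. A cofactor expansion along the first row shows that the $(k-1)$-th determinantal divisor of $X'$ divides every maximal minor of $X$ \emph{not} through $i$ as well, whence the gcd of the minors through $i$ equals $\varepsilon$; this uses only that $x_i$ is primitive, which is precisely why it is run for each $i$ separately. For Condition (2) I reduce the columns modulo $\pp$: because $\nu_\pp(\varepsilon)=d\ge 1$ for a relevant prime, every maximal minor vanishes mod $\pp$, so $\operatorname{rank}_{\FF}\overline X\le k-1$ and the points $[\overline x_i]$ lie in a proper projective subspace with strictly fewer than $N$ points. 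Hence any $S$ with $|S|=N$ contains a coincident pair; an iterated ``jet'' refinement of this pigeonhole argument (passing to successive $\pp$-adic orders, each organized again by $\PP^{k-1}(\FF)$) upgrades the coincidence to the required strict inequality $\nu_\pp(\psi(t_{i_1,\ldots,i_{k-2},i,j}))>d$ for all completions, giving the desired pair $i,j\in S$.

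For sufficiency I must, conversely, \emph{construct} a primitive representation of the \emph{given} $\psi$. Starting from any $\DR$-representation $Y$ (which exists because $\DR$ is a principal ideal domain), the naive factorization $y_i=|y_i|\,\hat y_i$ does not help, since rescaling to fix the minors destroys primitivity; the divisibility recorded by $\nu_\pp(\psi(t_J))$ must instead be produced \emph{geometrically}, by aligning primitive columns modulo powers of $\pp$ (e.g.\ $(1,0)$ and $(1,\pp)$ already force a minor divisible by $\pp$ with both columns primitive). So I would build $X$ prime by prime by prescribing the $\pp$-adic jets of the columns so that the induced valuations of all $k\times k$ minors match those of $\psi$: Condition (2) supplies, at each jet level, enough distinct directions in $\PP^{k-1}(\FF)$ to place the columns without forcing unwanted extra vanishing, and Condition (1) guarantees that the per-index contents are compatible, so that the prescribed local data really assemble into a single matrix with primitive columns realizing $\psi$ (this is the representation of the underlying arithmetic matroid alluded to in the introduction). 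The main obstacle is exactly this realization step: matching \emph{all} minor valuations simultaneously at every prime with columns that stay unimodular, for which I expect to argue by induction on $\sum_i\nu_\pp(|x_i|)$, repeatedly using the room furnished by Condition (2) to lower a non-unit content while keeping the minors fixed, until all columns are primitive.
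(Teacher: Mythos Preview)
Your necessity argument for condition~(2) has a real gap. Reducing the primitive matrix $X$ modulo $\pp$ and using $\rank_{\FF}\overline X\le k-1$ does give, for any $S$ of size $N$, a coincident pair $[\overline x_i]=[\overline x_j]$; but from $x_j\equiv\lambda x_i\pmod\pp$ you only get $\nu_\pp(p_{i_1,\ldots,i_{k-2},i,j})\ge 1$, which is weaker than the hypothesis $\nu_\pp(\varepsilon)=d\ge 1$ you already have and says nothing about whether the valuation exceeds $d$. (Concretely: with $k=2$, $p_{1,2}=p_{1,3}=4$, $p_{2,3}=8$ and the primitive representation $\bigl(\begin{smallmatrix}1&3&1\\0&4&4\end{smallmatrix}\bigr)$, all three columns coincide modulo $2$, yet only the pair $(2,3)$ has $\nu_2>2$.) Your ``iterated jet refinement'' does not explain how to single out such a pair, and in fact cannot work at the level of $\overline X$: the factor $\pp^d$ sitting inside $\varepsilon$ erases exactly the information you need. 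The paper's mechanism is different: Lemma~4.4 is used to replace $X$ by a primitive representation whose first row lies in $(\pp^{d})$; dividing that row by $\pp^{d}$ produces a matrix $X'$ with minors $p_{i_1,\ldots,i_k}/\pp^{d}$, and it is the reduction of \emph{this} $X'$ modulo $\pp$ (full rank over $\FF$, with no column proportional to $e_1$) that yields the uncovered point in $\PP\FF^k$ and hence condition~(2).

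For sufficiency your proposal is only a plan. ``Prescribing $\pp$-adic jets so that all minor valuations match'' and ``induction on $\sum_i\nu_\pp(|x_i|)$'' name the difficulty rather than resolve it. The paper's construction is concrete and rather different: it takes the explicit upper-triangular representation from Theorem~3.6, divides its first row by $\varepsilon$ to obtain an $X'$ whose columns are already primitive (this is exactly where condition~(1) is used), and then, for each prime $\pp_\ell$, uses condition~(2) to find a direction $v\in\FF^k$ missed by the $\langle y_i\rangle$, performs row operations lifting elimination along $v$, and multiplies the resulting distinguished row by $\pp_\ell$. The Chinese Remainder Theorem enters only to make one set of integer row operations serve all primes simultaneously, not to reduce the whole problem to a DVR; your proposed localization step glosses over why primitive representations over each $R_{\pp_\ell}$ patch to one over $R$, and your opening claim that the matrix is ``well defined up to the left $\SL_k(R)$-action'' is not correct (two $R$-representations of the same $\psi$ differ by an element of $\SL_k(K)$, not in general of $\SL_k(R)$).
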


For the formulation and proof of the theorem we need some completely different notions and ideas (see Section \ref{sec:repspe} for details).
One can view a specialization of $\AC_{k,n}$ as a certain kind of \df{matroid}:
a matroid can be defined by prescribing subsets of $\{1,\ldots,n\}$ that are called linearly dependent.
If the matroid is the matroid of linear dependencies of some set of vectors $x_1,\ldots,x_n\in \DR^k$, then this information can also be encoded by the $k$-subsets that have determinant $0$.
A specialization in our sense contains some more information, it prescribes the values of the Pl\"ucker coordinates and hence the values of all determinants. If a matroid is given by a set of vectors, then it is called representable.
We will call a set of vectors that represents a specialization of $\AC_{k,n}$ a \df{representation}.

Classically, the axioms of matroids model linear dependence of vectors in a vector space.
Since for frieze patterns over the integers we need linear dependence over $\ZZ$, we are working
with a structure similar to an arithmetic matroid instead \cite{MR2989987}, \cite{MR4186616}.
Conditions on the greatest common divisors like (1) also appear in the study of arithmetic matroids.

The proof of Theorem \ref{thm:main} uses representations and coordinates and is thus much easier to comprehend than our proofs in \cite{MR4080476} and \cite{MR4311087}: we argue with coordinate vectors instead of Pl\"ucker coordinates.

In the special case $R=\ZZ$ one can additionally require that $\psi(t_{i_1,\ldots,i_k})>0$ for all $i_1<\ldots<i_k$. Such a specialization could then possibly come from an $\SL_k$-frieze pattern.
An $\SL_k$-frieze pattern is a
specialization with variables specialized to $\NN$, frozen variables specialized to $1$, and
which has a representation $x_1,\ldots,x_n$
such that the $x_i$ have coprime coordinates, hence in particular it satisfies the condition of Theorem \ref{thm:main}.
Together with this theorem, the following second result characterizes subpolygons in $\SL_k$-frieze patterns (see Section \ref{sec:slk}):

\begin{theor*}[Thm.\ \ref{thm:slk}]
Let $X=(x_1 \cdots x_n)\in \ZZ^{k\times n}$ be a representation of a
specialization of $\AC_{k,n}$
with $|x_i|=1$ for all $i=1,\ldots,n$.
Assume that $|x_{i_1}\ldots x_{i_k}|> 0$ for all $1\le i_1<\ldots <i_k\le n$.
Then the specialization
may be extended to an $\SL_k$-frieze pattern,
i.e.\ a specialization $\psi$ with positive integer values on the cluster variables and
in which all frozen variables are specialized to $1$.
\end{theor*}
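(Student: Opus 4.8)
The plan is to argue entirely with the representation $X=(x_1\cdots x_n)$ and to build the $\SL_k$-frieze pattern by \emph{inserting} new primitive columns between the given ones. The goal of the insertions is that, in the enlarged matrix, every cyclically consecutive $k\times k$ minor (the new frozen variables) equals $1$, while all maximal minors remain strictly positive and the original columns are never removed. The original columns then form a sub-polygon whose Pl\"ucker data is precisely the given specialization, and the enlarged matrix --- all of whose frozen minors are $1$ and all of whose remaining minors are positive integers --- is the desired $\SL_k$-frieze pattern. I would set up an induction whose monovariant is the multiset of frozen determinants $\{\,|x_i x_{i+1}\cdots x_{i+k-1}|\,\}_{i}$ (indices cyclic) ordered decreasingly and compared lexicographically; this multiset equals $(1,\ldots,1)$ exactly when $X$ is already an $\SL_k$-frieze, which is the base case.

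For the inductive step I would reduce a single frozen window of determinant $D>1$. For $k=2$ this is classical: the columns are primitive vectors spanning a pointed cone in $\ZZ^2$ (all $2\times 2$ minors are positive), and the unimodular refinement of that cone --- equivalently the Hirzebruch--Jung continued-fraction expansion --- inserts primitive rays between neighbours until all consecutive determinants equal $1$. Here positivity is automatic, because each inserted ray is a positive combination of its two neighbours, so every minor involving it is a positive combination of the (positive) minors of $X$. For general $k$ the same philosophy applies: between two consecutive columns I would insert a primitive vector $y$ lying in the open positive region cut out by the surrounding columns and chosen so that one newly created consecutive $k$-window becomes unimodular. The insertion spreads the original columns apart, so that the offending determinant-$D$ window is no longer cyclically consecutive --- it becomes an ordinary diagonal, where a value $>1$ is permitted --- while the frozen windows created around $y$ have strictly smaller determinants, so the monovariant strictly decreases.

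The main obstacle is the insertion lemma itself, namely exhibiting a \emph{primitive} $y$ that simultaneously (a) makes a new consecutive window equal to $1$, (b) keeps \emph{every} maximal minor of the enlarged matrix strictly positive, and (c) strictly lowers the monovariant. Condition (b) is the genuinely new difficulty for $k\ge 3$: a naive positive combination $y=\alpha x_j+\beta x_{j+1}$ of only two neighbours lies in their span and therefore forces a degenerate window $|x_j\,y\,x_{j+1}|=0$, destroying strict positivity. Thus $y$ must be transverse to every local hyperplane yet still lie in the totally positive region, and one must guarantee a \emph{lattice} point of the correct transverse height there; this is where total positivity of the configuration over $\ZZ$, together with the primitivity of the columns, has to be exploited. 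I expect this existence statement to be the crux. Once it is established, the monovariant is strictly decreasing and bounded below, so the procedure terminates; since original columns are never removed, they form the asserted sub-polygon, and the terminal matrix is the required $\SL_k$-frieze pattern.
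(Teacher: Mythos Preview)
Your overall architecture --- insert primitive lattice vectors between consecutive columns, keep all maximal minors positive, and induct on a monovariant measuring the frozen determinants --- is exactly the paper's strategy. The difference, and the place where your sketch has a genuine gap, is the target of the insertion. You ask for a primitive $y$ such that \emph{one} of the new consecutive $k$-windows becomes unimodular. That is too strong: the determinant $|x_j\cdots x_{k-1}\,y\,x_k\cdots x_{j+k-2}|$ is always a multiple of $|x_j\cdots x_{k-1}\,x_k\cdots x_{j+k-2}|$, the gcd of the $(k-1)\times(k-1)$ minors of the $k\times(k-1)$ block obtained by deleting~$y$. If all these $(k-1)$-volumes around the insertion point exceed~$1$, then \emph{every} new frozen window is forced to be $>1$. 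This situation occurs already for $k=3$ with four primitive columns whose three consecutive pairs each have $2\times 2$-minor gcd equal to $2$ (e.g.\ $(1,1,0),(1,-1,0),(1,1,-2),(1,3,-2)$, which also has all $3\times 3$ minors equal to $4>0$). So your condition~(a) cannot always be met, and with it your lexicographic monovariant argument, as stated, stalls.

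The paper circumvents this by taking the \emph{product} $D=\prod_i p_{[i,\ldots,i+k-1]}$ as the monovariant and by organising the insertion through the matrix $A$ whose rows are the normal vectors $\alpha_j$ to the walls of the simplicial cone you describe (so $(\alpha_j,y)=|x_j\cdots x_{k-1}\,y\,x_k\cdots x_{j+k-2}|$). One computes $\det A=\pm\prod_{i=1}^{k-1} p_{[i,\ldots,i+k-1]}$; putting $A$ (or the row-primitivised $\tilde A$) into Hermite normal form with diagonal $a_{1,1},\ldots,a_{k,k}$ then yields an integral $y$ in the open cone with $0<(\alpha_j,y)\le a_{j,j}$. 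If some $|\alpha_j|>1$ this already gives $\prod_j (\alpha_j,y)\le\det\tilde A<\det A$, so $D$ strictly drops. Once all $|\alpha_j|=1$ --- i.e.\ all consecutive $(k-1)$-volumes equal~$1$, which is exactly the obstruction to your~(a) --- one shows $a_{j+1,j+1}=p_{[j,\ldots,j+k-1]}$, and a refined choice ($y_1=1$, $y_2=0$) forces one coordinate strictly below its diagonal bound, again decreasing $D$. Thus the missing ingredient in your plan is this two-phase HNF argument: first insertions to make the $(k-1)$-volumes equal to~$1$, then insertions to shrink the frozen $k$-determinants, both measured by the product rather than by a single unimodular window.
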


The set of all clusters together with mutations gives a structure which has many features in common with a Weyl groupoid, see \cite{MR2755086} for the case of rank two. But a connection in higher rank was nebulous so far.
In addition to the characterization of subpolygons in $\SL_k$-friezes, in Section \ref{sec:cry} we also observe a relation between the cluster structure on the coordinate ring of a Grassmannian and the structure of a Weyl groupoid:
We consider specializations of the variables of a cluster to units.
It turns out that this produces arrangements of hyperplanes with special properties.
We exhibit some examples; for instance, we recover several crystallographic arrangements in rank three, these correspond to Weyl groupoids \cite{MR2820159}.
The following question remains:

\begin{question}
Which crystallographic arrangements are defined by representations of clusters in which we specialize all variables to $\pm 1$?
\end{question}

Since for fixed $k,n\in\NN$ the number of clusters is often infinite, on the other hand the number of matroids of rank $k$ on $1,\ldots,n$ is finite, it is conceivable that any matroid which has a representation is determined by the choice of units (in some ring of integers) for the variables of a fixed cluster.

\medskip
\noindent{\bf Acknowledgement:}
{I would like to thank L.~Moci and R.~Pagaria for very helpful discussions on arithmetic matroids and S.~Morier-Genoud for comments on a previous version and further references.}

\section{Grassmannian and Pl\"ucker coordinates}

Let $\DR$ be an integral domain and $\KR:=\Quot(\DR)$ its field of fractions.
Let $n\in\NN$ and $U\le \KR^n$ be
a point on the \df{Grassmannian} $\Gr(k,n)$,
i.e.\ a subspace of $\KR^n$ of dimension $k$.
Let
\[ v_1:=\begin{pmatrix} x_{1,1} \\ \vdots \\ x_{1,n} \end{pmatrix},
\ldots, v_k:=\begin{pmatrix} x_{k,1} \\ \vdots \\ x_{k,n} \end{pmatrix} \]
be a basis of $U$ and $X=(x_{i,j})_{1\le i\le n, 1\le j\le k}$
be the matrix with the coordinates of $v_1,\ldots,v_k$ in its \emph{rows}.
We write $X_{\{i_1,\ldots,i_k\}}$ for the matrix consisting of the columns $i_1,\ldots,i_k$ of $X$ (where the labels are not required to be different or in increasing ordering), and set
\[ p_{i_1,\ldots,i_k} := \det (X_{\{i_1,\ldots,i_k\}}). \]
These elements of $\KR$ are called the \df{Pl\"ucker coordinates} of $U$. Indeed,
consider the element $v_1\wedge \ldots\wedge v_k \in \bigwedge^k \KR^n$ of the $k$-th exterior power of $\KR^n$. Then
\[ v_1\wedge \ldots\wedge v_k
= \sum_{i_1,\ldots,i_k=1}^n x_{1,i_1} e_{i_1} \wedge \ldots \wedge x_{1,i_1} e_{i_k}
= \sum_{i_1\le \ldots\le i_k=1}^n p_{i_1,\ldots,i_k} e_{i_1} \wedge \ldots \wedge e_{i_k} \]
if $e_1,\ldots,e_n$ is the standard basis of $\KR^n$. It is easy to see that a base change on $U$ yields the same
Pl\"ucker coordinates.

Now view the coordinates in $X$ as variables in the polynomial algebra
\[ P:=\KR[x_{i,j} \mid 1\le i \le k, 1\le j \le n] \]
and consider the polynomial algebra
\[ T:=\KR[t_{i_1,\ldots,i_k} \mid 1\le i_1<\ldots<i_k\le n] = S({\bigwedge}^k \KR^n). \]
Let $p$ be the homomorphism
\[ p : T \rightarrow P, \quad t_{i_1,\ldots,i_k} \mapsto \det(X_{\{i_1,\ldots,i_k\}}) = p_{i_1,\ldots,i_k}. \]
We see that $\AC_{k,n}:=T/\ker (p)$ is the coordinate ring of $\Gr(k,n)$.

When $i_1,\ldots,i_k$ are not strictly increasing, we define
$p_{i_1,\ldots,i_k}:=\sgn(\sigma) p_{i_{\sigma(1)},\ldots,i_{\sigma(k)}}$
for a permutation $\sigma\in S_k$ sorting the labels.
The following lemma is an easy exercise.
\begin{lemma}
The Pl\"ucker coordinates satisfy the following \df{Pl\"ucker relations}:
\begin{equation}\label{eq:pluecker_relation}
\sum_{r=0}^k (-1)^r p_{i_1,\ldots,i_{k-1},j_r} p_{j_0,\ldots,\hat j_r,\ldots,j_k} = 0
\end{equation}
for all $i_1,\ldots,i_{k-1}$ and $j_0,\ldots,j_k$.
\end{lemma}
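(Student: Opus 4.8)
The plan is to prove the relation directly from the multilinear and alternating nature of the determinant, using only the elementary fact that $k+1$ vectors in $\KR^k$ are linearly dependent. Write $c_1,\ldots,c_n\in\KR^k$ for the columns of $X$. The sign convention $p_{i_1,\ldots,i_k}=\sgn(\sigma)\,p_{i_{\sigma(1)},\ldots,i_{\sigma(k)}}$ for non-increasing labels makes $p_{a_1,\ldots,a_k}=\det(c_{a_1},\ldots,c_{a_k})$ hold for \emph{every} tuple of indices, with $\det$ viewed as a multilinear alternating function of its $k$ column arguments. In this notation the two factors read $p_{i_1,\ldots,i_{k-1},j_r}=\det(c_{i_1},\ldots,c_{i_{k-1}},c_{j_r})$ and $p_{j_0,\ldots,\hat j_r,\ldots,j_k}=\det(c_{j_0},\ldots,\widehat{c_{j_r}},\ldots,c_{j_k})$.

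First I would establish a linear dependence among the $k+1$ columns $c_{j_0},\ldots,c_{j_k}$. Setting $E_r:=\det(c_{j_0},\ldots,\widehat{c_{j_r}},\ldots,c_{j_k})$ for the minor obtained by deleting the $r$-th of these columns, the claim is that
\[ \sum_{r=0}^k (-1)^r E_r\, c_{j_r} = 0 \qquad\text{in } \KR^k. \]
To see this, fix a coordinate $m\in\{1,\ldots,k\}$ and form the $(k+1)\times(k+1)$ matrix obtained from $(c_{j_0}\mid\cdots\mid c_{j_k})$ by prepending a copy of its $m$-th row. This matrix has two equal rows, so its determinant is $0$; Laplace expansion along the prepended row yields exactly $\sum_{r=0}^k (-1)^r E_r\,(c_{j_r})_m$, the cofactor of the entry in column $r+1$ carrying sign $(-1)^{1+(r+1)}=(-1)^r$. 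As $m$ was arbitrary, the displayed vector identity follows.

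The remaining step is immediate. Since $v\mapsto\det(c_{i_1},\ldots,c_{i_{k-1}},v)$ is linear in its last argument, I can pull the sum inside the determinant and apply the dependence relation:
\[ \sum_{r=0}^k (-1)^r p_{i_1,\ldots,i_{k-1},j_r}\, p_{j_0,\ldots,\hat j_r,\ldots,j_k}
= \det\!\Big(c_{i_1},\ldots,c_{i_{k-1}},\ \textstyle\sum_{r=0}^k (-1)^r E_r\, c_{j_r}\Big)
= \det(c_{i_1},\ldots,c_{i_{k-1}},0) = 0. \]

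I expect the only delicate point to be the bookkeeping of signs: one must check that the cofactor signs in the expansion of the degenerate $(k+1)\times(k+1)$ determinant line up with the $(-1)^r$ in the statement, and that the passage to arbitrary (unsorted, possibly repeated) index tuples is legitimate — both guaranteed by the convention identifying $p$ with the determinant on columns listed in the given order. No genuine obstacle arises: the argument uses only multilinearity and alternation, which hold over any commutative ring, so the relation is in fact a polynomial identity in the entries $x_{i,j}$, valid over $\DR$ as well as $\KR$.
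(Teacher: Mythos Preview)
Your argument is correct: the cofactor identity $\sum_{r=0}^k(-1)^r E_r\,c_{j_r}=0$ is exactly the generalized Cramer relation for $k+1$ vectors in $\KR^k$, and feeding it into the last slot of the multilinear form $\det(c_{i_1},\ldots,c_{i_{k-1}},-)$ yields the Pl\"ucker relation immediately. The sign check you flagged is fine (the cofactor in column $r+1$ of the first row carries $(-1)^{1+(r+1)}=(-1)^r$), and the extension to arbitrary unsorted or repeated labels is indeed covered by the convention $p_{a_1,\ldots,a_k}=\det(c_{a_1},\ldots,c_{a_k})$.

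There is nothing to compare against: the paper does not prove the lemma at all, declaring it ``an easy exercise''. Your proof is the standard one and would be perfectly appropriate to insert where the paper omits it.
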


In this paper, all indices $i$ with $i>n$ or $i<0$ which should be within a range $\{1,\ldots,n\}$ have to be reduced modulo $n$ into this range.
Moreover, we write $[i_1,\ldots,i_k]$ for the ordered sequence with elements $\{i_1,\ldots,i_k\}$ in ascending order.

\section{Representations and specializations}\label{sec:repspe}

\begin{defin}
Let $\AC_{k,n}$ be the coordinate ring of $\Gr(k,n)$ as defined above.
A homomorphism
\[ \psi : \AC_{k,n} \rightarrow \KR \]
is called a \df{specialization} of $\AC_{k,n}$.

A \df{representation} of $\psi$ is a matrix $X\in \KR^{k\times n}$ such that
\[ \psi(t_{i_1,\ldots,i_k}) = \det(X_{\{i_1,\ldots,i_k\}}) \]
for all $1\le i_1<\ldots<i_k \le n$.
\end{defin}

\begin{remar}
Note that we do not require that a specialization takes the value $1$ on frozen variables (see Section \ref{sec:clu}).
The case when all frozen variables are specialized to $1$ produces an $\SL_k$-frieze pattern as a representation of the specialization, see Example \ref{ex:slk} or \cite{MR4226982}.
\end{remar}

\begin{defin}
An \df{$\SL_k$-frieze pattern} is a specialization $\psi$ to $\QQ$ with
\[ \psi(t_{i_1,\ldots,i_k})\in \ZZ_{>0} \quad\text{and}
\quad \psi(t_{[i,\ldots,i+k-1]})=1 \]
for all $1\le i_1 <\cdots<i_k\le n$, $i=1,\ldots,n$.
\end{defin}

\begin{remar}
Choose a subset $\MC\subseteq \{(i_1,\ldots,i_k) \mid 1\le i_1<\ldots<i_k\le n \}$
and let $\Sigma$ be the set of specializations with
$\psi(t_{i_1,\ldots,i_k})=0$ for $(i_1,\ldots,i_k)\in \MC$.
If $\Sigma$ is not empty, then $\MC$ is a \df{matroid}.
The set of representations of all $\psi\in \Sigma$ is the moduli space of representations of $\MC$ as a matroid.
\end{remar}

The study of integral specializations (or specializations with values in a principal ideal domain) involves the following important volume function.

\begin{defin}\label{volume}
Let $R$ be a principal ideal domain and $A\in R^{k\times r}$, $r\le k$.
Then we denote by $|A|$ a generator of the ideal generated by all $r\times r$ minors of $A$.
By abuse of notation, we will write $|A|=a$ if the ideals $(|A|)=(a)$ are equal.
\\
In particular, if $R=\ZZ$, then we can choose the non-negative integer
\[ |A|=|\ZZ^k/(A\cdot \ZZ^r)|. \]
Hence $|A|$ is the product of the elementary divisors of $A$ and is equal to the \df{volume} of the parallelotope spaned by the columns of $A$.
\\
For $\beta_1,\dots,\beta_m\in R^k$, we will write
$|\beta_1 \cdots \beta_m|$ for the volume of the matrix with columns $\beta_1,\ldots,\beta_m$.
If $m=1$ and $\beta \in \ZZ^k\setminus \{0\}$, then $|\beta|$ is the greatest common divisor of the coordinates of $\beta$. We will write $|\beta|=1$ if $|\beta|$ is a unit.
\\
If $m=k$ and $\beta _1,\dots,\beta _k\in \ZZ^k$,
then we will write $|\beta _1\cdots\beta _k|$ for the
determinant of the matrix with columns $\beta _1,\dots,\beta _k$;
here we keep track of the signs.
\end{defin}

\begin{remar}
An \df{arithmetic matroid} consists of a set $E=\{1,\ldots,n\}$ together with
two maps $\rk$ and $m$ on subsets of $E$ such that $\rk$ may be viewed as the rank function of a matroid,
and, roughly speaking, $m$ `is' the volume map defined above.
Of course, these maps have to satisfy certain axioms, see \cite[Def.\ 2.3]{MR4186616} for details.
Arithmetic matroids are used to investigate toric arrangements \cite{MR2989987}.
Compared to our notion of a specialization, there are several crucial differences:
Representations of arithmetic matroids are in general chosen over the ring $R=\ZZ$ because the map $m$ takes values in $\ZZ_{>0}$.
Moreover, our specializations are much more restrictive since we include for instance informations on the signs of volumes by prescribing the exact values of all determinants.
In contrast to our specializations, there are more representations for an arithmetic matroid in general
(see Theorem \ref{thm:realization} below).
\end{remar}

The following result is almost trivial in the case when the frozen variables are specialized to $1$ (Example \ref{ex:slk}). The situation is more subtle in the general case:

\begin{theor}\label{thm:realization}
Let $\DR$ be a principal ideal domain, $K$ its field of fractions, and
$\psi$ be a specialization of $\AC_{k,n}$ such that
\[ \psi(t_{i_1,\ldots,i_k})\in \DR\setminus\{0\} \]
for all $1\le i_1<\ldots<i_k \le n$.
Then the map $\psi$ has a representation $X\in \DR^{k\times n}$.
\end{theor}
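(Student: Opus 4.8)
The plan is to first produce a representation over the fraction field $K$ using the Pl\"ucker relations, and then to adjust it by a base change into a representation over the ring $\DR$ itself. Since $\psi\colon\AC_{k,n}\to K$ is a ring homomorphism, the values $\psi(t_{i_1,\ldots,i_k})$ satisfy the Pl\"ucker relations \eqref{eq:pluecker_relation} and, by hypothesis, none of them vanishes. Hence they are the Pl\"ucker coordinates of a well-defined $k$-dimensional subspace $U\le K^n$; choosing a basis and rescaling a single basis vector, I obtain a matrix $X'\in K^{k\times n}$ whose rows span $U$ and whose maximal minors satisfy $\det(X'_{\{i_1,\ldots,i_k\}})=\psi(t_{i_1,\ldots,i_k})$ exactly.

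Next I would pass to the lattice $L:=U\cap \DR^n$. This is a $\DR$-submodule of $\DR^n$, and it is \emph{saturated}: if $rv\in L$ for some $v\in\DR^n$ and $0\ne r\in\DR$, then $v\in U$ because $U$ is a subspace, so $v\in U\cap\DR^n=L$. Thus $\DR^n/L$ is torsion-free, hence free as $\DR$ is a PID, so $L$ is a free direct summand of $\DR^n$ of rank $k$ (clearing denominators shows $U=L\otimes_{\DR} K$). Let $Y\in\DR^{k\times n}$ be a basis matrix of $L$. Completing a basis of $L$ to a basis of $\DR^n$ gives a matrix in $\GL_n(\DR)$ whose determinant, by Laplace expansion along the first $k$ rows, is a $\DR$-linear combination of the maximal minors of $Y$; since that determinant is a unit, the minors of $Y$ generate the unit ideal, i.e.\ $|Y|=1$. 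Finally, $X'$ and $Y$ have the same row span $U$, so $X'=hY$ for some $h\in\GL_k(K)$, and writing $c:=\det(h)$ gives $\psi(t_{i_1,\ldots,i_k})=c\cdot\det(Y_{\{i_1,\ldots,i_k\}})$ for every index set.

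The one point that needs an argument --- and the main obstacle --- is that the scaling constant $c$ lies in $\DR$ rather than merely in $K$. For this I would compare ideals: the elements $\psi(t_{i_1,\ldots,i_k})=c\,\det(Y_{\{i_1,\ldots,i_k\}})$ all lie in $\DR$, so the $\DR$-submodule of $K$ they generate is an honest ideal of $\DR$; on the other hand it equals $c\cdot(\det(Y_{\{i_1,\ldots,i_k\}})\mid 1\le i_1<\ldots<i_k\le n)=c\cdot\DR$ because $|Y|=1$. Hence $(c)\subseteq\DR$, forcing $c\in\DR$ (concretely, a B\'ezout combination $\sum_I a_I\det(Y_{\{i_1,\ldots,i_k\}})=1$ yields $c=\sum_I a_I\,\psi(t_{i_1,\ldots,i_k})\in\DR$). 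It then suffices to set $X:=\mathrm{diag}(c,1,\ldots,1)\,Y$, which lies in $\DR^{k\times n}$ and satisfies $\det(X_{\{i_1,\ldots,i_k\}})=c\,\det(Y_{\{i_1,\ldots,i_k\}})=\psi(t_{i_1,\ldots,i_k})$, so $X$ is the desired representation of $\psi$ over $\DR$.
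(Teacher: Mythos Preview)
Your argument is correct, and it takes a genuinely different route from the paper's. The paper proceeds by induction on $k$: it lets $\delta$ be a gcd of the $p_{1,i_2,\ldots,i_k}$, writes $\delta$ as a B\'ezout combination, and uses the Pl\"ucker relations to define integers $a_2,\ldots,a_n$ such that
\[
X=\begin{pmatrix}\delta & a_2 & \cdots & a_n\\ 0 & & & \\ \vdots & & \tilde X & \\ 0 & & & \end{pmatrix}
\]
is a representation, where $\tilde X\in\DR^{(k-1)\times(n-1)}$ represents the smaller specialization $t_{i_1,\ldots,i_{k-1}}\mapsto p_{1,i_1+1,\ldots,i_{k-1}+1}/\delta$ obtained by induction. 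Your approach instead first appeals to the classical fact that a nonzero tuple satisfying the Pl\"ucker relations comes from an actual subspace $U\le K^n$, and then uses the structure theory of finitely generated modules over a PID on the saturated lattice $L=U\cap\DR^n$. The paper's proof is entirely self-contained (it never invokes that the Pl\"ucker relations cut out the Grassmannian) and yields an explicit upper block-triangular representation; this specific shape is reused verbatim in the proof of Theorem~\ref{thm:main}. Your argument, on the other hand, is shorter, makes the global gcd transparent (your $c$ generates the ideal $(\varepsilon)$ appearing later), and produces as a byproduct a representation $Y$ with $|Y|=1$, which is a pleasant bonus. One small point worth stating more explicitly in your write-up: the passage ``hence they are the Pl\"ucker coordinates of a subspace'' is the only place you lean on a nontrivial external fact, and it deserves a reference or a one-line justification (e.g.\ normalising $p_{1,\ldots,k}=1$ and writing down the standard affine chart).
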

\begin{proof}
Denote $p_{i_1,\ldots,i_k}:=\psi(t_{i_1,\ldots,i_k})$.
We proceed by induction over $k$.
If $k=1$ then we may choose the representation $X=(p_1,\ldots,p_n)$.
Now assume that $k>1$.
Let $\delta$ be a greatest common divisor of $p_{1,i_2,\ldots,i_k}$ for all $1< i_2<\ldots<i_k \le n$.
Then there exist $\lambda_{i_2,\ldots,i_k}\in \DR$ such that
\[ \delta = \sum_{1< i_2<\ldots<i_k \le n} \lambda_{i_2,\ldots,i_k} p_{1,i_2,\ldots,i_k}.  \]
Using the Pl\"ucker relations \eqref{eq:pluecker_relation} we get:
\begin{eqnarray*}
\delta \cdot p_{j_1,\ldots,j_k} &=&
\sum_{1<i_2<\ldots<i_k} p_{j_1,\ldots,j_k} \lambda_{i_2,\ldots,i_k} p_{1,i_2,\ldots,i_k} \\
&=& \sum_{1<i_2<\ldots<i_k} \sum_{\nu=1}^k (-1)^{\nu-1}
p_{1,j_1,\ldots,\hat j_\nu,\ldots,j_k} \lambda_{i_2,\ldots,i_k} p_{j_\nu,i_2,\ldots,i_k} \\
&=& \sum_{\nu=1}^k (-1)^{\nu-1}
\left( \sum_{1<i_2<\ldots<i_k} \lambda_{i_2,\ldots,i_k} p_{j_\nu,i_2,\ldots,i_k} \right)
p_{1,j_1,\ldots,\hat j_\nu,\ldots,j_k}.
\end{eqnarray*}
With
\[ a_j := \sum_{1<i_2<\ldots<i_k} \lambda_{i_2,\ldots,i_k} p_{j,i_2,\ldots,i_k} \]
we obtain
\begin{equation}\label{eq:ai}
p_{j_1,\ldots,j_k} = \sum_{\nu=1}^k (-1)^{\nu-1}
a_{j_\nu} \frac{p_{1,j_1,\ldots,\widehat{j_\nu},\ldots,j_k}}{\delta}.
\end{equation}
Note that
\begin{equation}\label{eq:Xt}
\AC_{k-1,n-1} \rightarrow K, \quad
t_{i_1,\ldots,i_{k-1}} \mapsto \frac{p_{1,i_1+1,\ldots,i_{k-1}+1}}{\delta}\in \DR\setminus\{0\}
\end{equation}
is a specialization of $\AC_{k-1,n-1}$ with Pl\"ucker coordinates in $\DR\setminus\{0\}$.
Hence by induction this has a representation $\tilde X \in \DR^{(k-1)\times (n-1)}$.
Now it suffices to check that
\[ X = \begin{pmatrix}
\delta & a_2 & \cdots & a_n \\
0 & & & \\
\vdots & & \tilde X & \\
0 & & & \\
\end{pmatrix} \]
is a representation of $\psi$: Writing $X=(x_1\cdots x_n)$, we get
$|x_1 x_{i_2} \cdots x_{i_k}|=p_{1,i_2,\ldots,i_k}$ by \eqref{eq:Xt}
for all $1<i_2<\ldots<i_k\le n$ and
$|x_{j_1} \cdots x_{j_k}|=p_{j_1,\ldots,j_k}$ by Equation \eqref{eq:ai}
for all $1<j_1<\ldots<j_k\le n$.
\end{proof}

\section{Representations with volume one vectors}

Our goal in Section \ref{sec:slk} is to characterize specializations which are restrictions of $\SL_k$-friezes to subsets of $\{1,\ldots,n\}$; note that $\SL_k$-friezes have representations in which all vectors have volume $1$.
Therefore the first step is to understand under which circumstances a representation exists such that all vectors have coprime entries.

\begin{examp}
Let $\psi : \AC_{2,3} \rightarrow \ZZ$, $\psi(t_{1,2})=\psi(t_{2,3})=\psi(t_{1,3})=3$.
Then the following matrices are both representations of $\psi$:
\[
\begin{pmatrix}
1 & 1 & 0 \\
0 & 3 & 3
\end{pmatrix},
\quad
\begin{pmatrix}
1 & 2 & 1 \\
0 & 3 & 3
\end{pmatrix}.
\]
However, only the second one has the property that all columns have volume $1$.
\end{examp}

\begin{propo}\label{gcd_cond}
Let $\DR$ be a principal ideal domain and
$X=(x_1\cdots x_n)\in \DR^{k \times n}$ be such that
$|x_i|=1$ for all $i=1,\ldots,n$. Then for all $1\le i \le n$, the ideals
\[ (|x_i x_{i_2} \cdots x_{i_k}| \mid 1\le i_2 < \ldots < i_k \le n)
= (|x_{i_1} \cdots x_{i_k}| \mid 1\le i_1 < \ldots < i_k \le n) \]
coincide.
\end{propo}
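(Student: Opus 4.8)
The plan is to prove the nontrivial inclusion by normalizing the distinguished column. Write $I := (|x_{i_1}\cdots x_{i_k}| \mid 1\le i_1<\cdots<i_k\le n)$ for the right-hand ideal and $I_i := (|x_i x_{i_2}\cdots x_{i_k}| \mid 1\le i_2<\cdots<i_k\le n)$ for the left-hand one. The inclusion $I_i\subseteq I$ is immediate, since every generator of $I_i$ is one of the generators of $I$. The content of the statement is the reverse inclusion: every $k$-fold volume $|x_{j_1}\cdots x_{j_k}|$ must already lie in the ideal generated by those volumes that use the column $x_i$. If $i\in\{j_1,\ldots,j_k\}$ this is a generator of $I_i$, so the only real case is $i\notin\{j_1,\ldots,j_k\}$.

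First I would exploit the hypothesis $|x_i|=1$. Since $x_i$ is unimodular and $\DR$ is a principal ideal domain, $\DR x_i$ is a direct summand of $\DR^k$, so $x_i$ extends to a basis of $\DR^k$; equivalently there is $g\in\GL_k(\DR)$ with $g x_i = e_1$. Replacing $X$ by $gX$ multiplies every minor, of any size, by the unit $\det g$, and hence leaves both ideals $I$ and $I_i$ unchanged. I may therefore assume from the outset that $x_i = e_1$.

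Now write each column as $x_j=\binom{y_j}{\bar x_j}$ with $y_j\in\DR$ and $\bar x_j\in\DR^{k-1}$, and let $\bar X$ be the $(k-1)\times n$ matrix with columns $\bar x_j$. Expanding $|x_i x_{i_2}\cdots x_{i_k}|=\det(e_1,x_{i_2},\ldots,x_{i_k})$ along its first column shows it equals the $(k-1)\times(k-1)$ minor $\det(\bar x_{i_2},\ldots,\bar x_{i_k})$; thus $I_i$ is precisely the ideal generated by all $(k-1)$-minors of $\bar X$. On the other hand, expanding an arbitrary $|x_{j_1}\cdots x_{j_k}|=\det(x_{j_1},\ldots,x_{j_k})$ along its first row yields
\[
|x_{j_1}\cdots x_{j_k}| \;=\; \sum_{s=1}^k (-1)^{s+1}\, y_{j_s}\, \det\bigl(\bar x_{j_1},\ldots,\widehat{\bar x_{j_s}},\ldots,\bar x_{j_k}\bigr),
\]
an $\DR$-linear combination of $(k-1)$-minors of $\bar X$. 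Each such minor lies in $I_i$, hence so does $|x_{j_1}\cdots x_{j_k}|$. This gives $I\subseteq I_i$, and together with the trivial inclusion the two ideals coincide.

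The only step that is not formal bookkeeping, and thus the main obstacle, is the completion of the unimodular vector $x_i$ to a basis of $\DR^k$: this is exactly where the principal ideal domain assumption is used, and it is also where the hypothesis $|x_i|=1$ genuinely enters. Without unimodularity one cannot move $x_i$ to $e_1$, and the conclusion can indeed fail (for instance with $x_i$ a proper multiple of a basis vector while the remaining columns already generate the unit ideal). It is worth emphasizing that, for a fixed $i$, the argument uses only $|x_i|=1$ and makes no use of the coprimality of the other columns; everything after the normalization is a Laplace expansion.
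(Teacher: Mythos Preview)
Your proof is correct and follows essentially the same route as the paper: normalize $x_i$ to $e_1$ via a base change over $\DR$ (using $|x_i|=1$ and that $\DR$ is a PID), then identify the generators of $I_i$ with the $(k-1)$-minors of the truncated matrix and apply Laplace expansion to express an arbitrary $k$-minor as an $\DR$-linear combination of these. Your write-up is in fact more explicit than the paper's about why the base change exists and why only $|x_i|=1$ (not the other columns' coprimality) is needed, which is a welcome addition.
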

\begin{proof}
The inclusion ``$\subseteq$'' is trivial. For the converse,
without loss of generality, after a base change over $\DR$ we may assume that
$x_i$ is the first standard basis vector (remember that $|x_i|=1$). Denote $w_j$ the vector $x_j$ in which we have removed the first coordinate. Then
\[ |x_i x_{i_2} \cdots x_{i_k}| = |w_{i_2} \cdots w_{i_k}|. \]
Hence
\[ |x_{i_1} \cdots x_{i_k}| =
\sum_{\ell=1}^k (-1)^\ell X_{1,i_\ell} |w_{i_1} \cdots \widehat{w_{i_\ell}} \cdots w_{i_k}| \]
is a linear combination of $|x_i x_{i_2} \cdots x_{i_k}|$, $i_2<\ldots<i_k$.
\end{proof}

\begin{examp}
Let
$$\psi : \AC_{2,3} \rightarrow \ZZ,\quad \psi(t_{1,2})=\psi(t_{2,3})=\psi(t_{1,3})=2.$$
All ideals considered in Proposition \ref{gcd_cond} are then equal to $(2)$, so one could hope for a
representation of $\psi$ in which the volumes of the columns are all equal to $1$.
In this case, we could choose the first column to be the first standard basis vector.
But then, this representation would be the matrix
\[
\begin{pmatrix}
1 & a & b \\
0 & 2 & 2
\end{pmatrix}
\]
for some $a,b\in\ZZ$ with $a-b=1$.
Since $a$ and $b$ must have different parity, one of them is even, the corresponding column has volume $2$, and we get a contradiction.
Thus it is not possible to find a representation such that the volumes of the columns are all equal to $1$.

We will see in the theorem below that in this example, the explanation is that the projective line over $\FF_2$ has only $3$ points.
\end{examp}

\begin{lemma}\label{lem:prow}
Let $\DR$ be a principal ideal domain and
$\psi$ be a specialization of $\AC_{k,n}$ such that
\[ \psi(t_{i_1,\ldots,i_k})\in \DR\setminus\{0\} \]
for all $1\le i_1<\ldots<i_k \le n$.
Assume that there is a prime $\pp\in \DR$ with
$$ \pp \in (p_{i_1,\ldots,i_k} \mid 1\le i_1 < \ldots < i_k \le n).$$
Then if $\psi$ has a representation
$X=(x_1 \cdots x_n)\in \DR^{k\times n}$
such that $|x_i|=1$ for all $1\le i \le n$,
then there is a representation
$X'=(x'_1 \cdots x'_n)\in \DR^{k\times n}$
such that $|x'_i|=1$ and $X'_{1,i}\in(\pp)$ for all $1\le i \le n$,
\end{lemma}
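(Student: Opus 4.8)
The plan is to produce the desired representation as $X'=gX$ for a suitable $g\in\SL_k(\DR)$, and to find $g$ by reducing everything modulo $\pp$. First I would record the two invariances of left multiplication by an element $g\in\SL_k(\DR)$. Since $\det g=1$, for every $k$-subset one has $\det((gX)_{\{i_1,\ldots,i_k\}})=\det(g)\det(X_{\{i_1,\ldots,i_k\}})=\det(X_{\{i_1,\ldots,i_k\}})$, so $gX$ is again a representation of $\psi$. Since moreover $g,g^{-1}\in\DR^{k\times k}$, left multiplication preserves the content of each column, whence $|(gX)_i|=|x_i|=1$. It therefore suffices to produce $g\in\SL_k(\DR)$ whose first row $a^{\top}$ satisfies $a^{\top}X\equiv 0\pmod{\pp}$: then $X'_{1,i}=(a^{\top}X)_i\in(\pp)$ for all $i$, as required.

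The usable form of the hypothesis on $\pp$ is that $\pp$ divides every Pl\"ucker coordinate $p_{i_1,\ldots,i_k}=\psi(t_{i_1,\ldots,i_k})$; equivalently, all maximal minors of $X$ reduce to zero modulo $\pp$. Since $\pp$ is prime in the principal ideal domain $\DR$, the ideal $(\pp)$ is maximal and $\FF:=\DR/(\pp)$ is a field. The reduced matrix $\bar X\in\FF^{k\times n}$ then has all $k\times k$ minors equal to $0$, hence rank strictly less than $k$. Consequently its $k$ rows are linearly dependent over $\FF$, so there is a nonzero $\bar a\in\FF^{k}$ with $\bar a^{\top}\bar X=0$.

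Next I would lift $\bar a$ to a unimodular vector $a\in\DR^{k}$, that is, one with $|a|=1$. This is possible: choose any lift, divide out its content, and note that this content is coprime to $\pp$ (otherwise $\bar a=0$), so the rescaled lift still reduces to a nonzero multiple of $\bar a$ and hence still satisfies $a^{\top}X\equiv 0\pmod{\pp}$. Because $a$ is unimodular over the PID $\DR$, the submodule $\DR a$ is a direct summand and $\DR^{k}/\DR a$ is free, so $a$ extends to a $\DR$-basis $a,b_2,\ldots,b_k$ of $\DR^{k}$. Arranging these as the rows of a matrix gives $g_0\in\GL_k(\DR)$ with first row $a^{\top}$, and multiplying one of the rows $b_2^{\top},\ldots,b_k^{\top}$ by the unit $(\det g_0)^{-1}\in\DR^{\times}$ yields $g\in\SL_k(\DR)$ with unchanged first row. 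Then $X':=gX$ is a representation of $\psi$ with $|x'_i|=1$ and $X'_{1,i}\in(\pp)$ for all $i$.

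The step I expect to need the most care is the passage from the field-linear relation $\bar a$ back to an integral, unimodular change of basis: one must simultaneously ensure that the chosen lift $a$ is unimodular, so that it extends to an element of $\SL_k(\DR)$, and that rescaling it to be unimodular does not destroy the annihilation relation modulo $\pp$. Once this is in place, the remainder is linear algebra over the field $\FF$ together with the standard fact that a unimodular row over a PID completes to an invertible matrix. I would also double-check at the outset that the hypothesis is being used in the form ``$\pp$ divides all Pl\"ucker coordinates'' (equivalently $(p_{i_1,\ldots,i_k}\mid\ldots)\subseteq(\pp)$), since this divisibility is exactly what forces $\operatorname{rank}\bar X<k$ and is in fact necessary for the conclusion: a first row divisible by $\pp$ makes every maximal minor divisible by $\pp$ via cofactor expansion.
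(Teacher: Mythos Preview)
Your proof is correct and follows essentially the same approach as the paper's: both reduce $X$ modulo $\pp$, use that the resulting matrix over the field $R/(\pp)$ has rank less than $k$ (since $\pp$ divides all maximal minors), and then lift the resulting linear dependence among the rows to an $\SL_k(\DR)$-transformation making one row divisible by $\pp$. The paper phrases this via elementary row operations on $X$ guided by a choice of $\ell=\rank \bar X$ independent rows (then permutes and adjusts by a unit), whereas you explicitly pick a left-kernel vector $\bar a$, lift it to a unimodular $a$, and complete it to a basis; the content is the same, and your remark about reading the hypothesis as $(\pp)\supseteq(p_{i_1,\ldots,i_k}\mid\ldots)$ matches the paper's usage.
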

\begin{proof}
Let $F:=R/(\pp)$ and $Y\in F^{k\times n}$ be the matrix
\[ Y_{i,j} := X_{i,j} + (\pp), \quad 1\le i \le k, \:\: 1\le j \le n. \]
Note that each column of $Y$ contains a nonzero entry since $|x_i|=1$ for all $i$,
and $\ell:=\rank(Y)>0$.
Moreover $\ell<k$ because $\pp$ divides each $k\times k$-minor of $X$.

Choose $\ell$ linearly independent rows $j_1,\ldots,j_\ell$ of $Y$.
Then adding $R$-multiples of the rows $X_{j_1},\ldots,X_{j_\ell}$ to the remaining rows we obtain a matrix
$X'$ in which all the entries on the $k-\ell$ other rows are divisible by $\pp$; after permuting the rows and possibly multiplying with a unit we may assume that the first row consists of elements divisible by $\pp$.
Note that $|x'_i|=1$ for all $i$ because all the applied operations correspond to multiplications with matrices of determinant $1$.
\end{proof}

\begin{defin}
For a prime $\pp$ in a principal ideal domain $R$ we write $\nu_\pp(a)$ for the $\pp$-\df{valuation} of $a\in R$, i.e.\ the maximal $m\in\ZZ_{\ge 0}$ such that $a\in (\pp^m)$.
\end{defin}

\begin{theor}\label{thm:main}
Let $\DR$ be a principal ideal domain and
$\psi$ be a specialization of $\AC_{k,n}$ such that
\[ \psi(t_{i_1,\ldots,i_k})\in \DR\setminus\{0\} \]
for all $1\le i_1<\ldots<i_k \le n$.
We write $p_{i_1,\ldots,i_k}:=\psi(t_{i_1,\ldots,i_k})$, $1\le i_1<\ldots<i_k \le n$.
Let $\varepsilon\in R$ be such that
$$(\varepsilon)=(p_{i_1,\ldots,i_k} \mid 1\le i_1 < \ldots < i_k \le n).$$
We write $(\varepsilon)=(\pp_1^{d_1})\cap\cdots\cap(\pp_e^{d_e})$ for primes $\pp_1,\ldots,\pp_e$.

Then $\psi$ has a representation
$X=(x_1 \cdots x_n)\in \DR^{k\times n}$
such that $|x_i|=1$ for all $1\le i \le n$ if and only if the following conditions are satisfied:
\begin{enumerate}
\item For all $1\le i \le n$, the ideals
\[ (p_{i,i_2,\ldots,i_k} \mid 1\le i_2 < \ldots < i_k \le n)
= (\varepsilon) \]
coincide.
\item For each $\ell=1,\ldots,e$ and $S\subseteq \{1,\ldots,n\}$,
\[ |S|=|\PP (R/(\pp_\ell))^k| \quad \Longrightarrow \quad
\exists i,j \in S \:\:
\forall i_1,\ldots,i_{k-2} \:\::\:\: \nu_{\pp_\ell}(p_{i_1,\ldots,i_{k-2},i,j})\ne \nu_{\pp_\ell}(\varepsilon). \]
\end{enumerate}
(The second condition is void for those $\pp_\ell$ for which $R/(\pp_\ell)$ is infinite.)
\end{theor}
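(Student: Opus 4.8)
The plan is to prove the two directions separately, using the volume calculus and Lemma~\ref{lem:prow} as the main tools.

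First I would establish the necessity of the two conditions, assuming a representation $X=(x_1\cdots x_n)$ with $|x_i|=1$ exists. Condition~(1) is immediate: Proposition~\ref{gcd_cond} shows that for each fixed $i$ the ideal generated by the minors through column $x_i$ equals the ideal generated by \emph{all} $k\times k$ minors, which is $(\varepsilon)$ by definition. For condition~(2), fix a prime $\pp=\pp_\ell$ with $F:=R/(\pp)$ finite. Reducing each column $x_i$ modulo $\pp$ and rescaling, I get a well-defined point $\bar x_i\in\PP(F^k)$ (using $|x_i|=1$, so $\bar x_i\ne 0$). If $|S|=|\PP(F^k)|+1$ then by pigeonhole two indices $i,j\in S$ have $\bar x_i=\bar x_j$; but I actually want to argue from a set of the \emph{exact} size $|\PP(F^k)|$, so I would instead show that \emph{some} pair of columns can be made coprime in the required minor. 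The cleaner formulation: if for \emph{every} pair $i,j\in S$ every minor $p_{i_1,\ldots,i_{k-2},i,j}$ had $\pp$-valuation equal to $\nu_\pp(\varepsilon)$, then after dividing out $\pp^{\nu_\pp(\varepsilon)}$ the reduced columns $\{\bar x_s\mid s\in S\}$ would be $|\PP(F^k)|$ distinct points that are pairwise ``maximally dependent,'' which I expect to force a contradiction with the count of projective points. The key is to translate the valuation hypothesis $\nu_\pp(p_{\ldots,i,j})\ge\nu_\pp(\varepsilon)$ into a statement that the reduced vectors $\bar x_i,\bar x_j$ lie in a common hyperplane or coincide after the appropriate normalization.

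For the sufficiency direction I would build the representation prime-by-prime, strengthening Theorem~\ref{thm:realization}, whose representation $X\in R^{k\times n}$ need not have columns of volume $1$. Starting from such an $X$, the obstruction to $|x_i|=1$ is exactly that some prime $\pp$ divides all coordinates of some column. For each bad prime $\pp=\pp_\ell$ I would use condition~(2) together with Lemma~\ref{lem:prow} iteratively: Lemma~\ref{lem:prow} lets me arrange, after a unimodular base change, that the first row of $X$ is divisible by $\pp$, reducing the problem to the $(k-1)\times n$ matrix of the remaining rows over the residue data. Here condition~(2) guarantees that when I pass through the finite set $\PP(F^k)$ there are not too many columns forced to collide, so at each stage I can correct a column whose gcd is divisible by $\pp$ by adding suitable $R$-combinations of the other columns without destroying the prescribed Pl\"ucker values (the minors are unchanged under such unimodular operations). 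The counting condition $|S|=|\PP(F^k)|$ is precisely what ensures the pigeonhole argument does \emph{not} obstruct the correction.

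The main obstacle I anticipate is controlling the interaction between different primes $\pp_1,\ldots,\pp_e$ simultaneously, and the bookkeeping of the residue-field reduction in the induction on $k$. A single unimodular correction over $R$ affects all primes at once, so I cannot simply fix each prime in isolation; I expect to need a Chinese-Remainder-style argument or an induction that tracks, for each column, the exact set of primes still dividing its gcd, decreasing a suitable total $\sum_i\nu(|x_i|)$ monotonically. Verifying that the correction step terminates and that condition~(2) survives the base changes of Lemma~\ref{lem:prow}---so that the projective-line count really is the exact combinatorial threshold---is where the real work lies.
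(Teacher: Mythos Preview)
Your plan has the right ingredients listed but the roles of several tools are reversed, and the central construction for sufficiency is missing. First, Lemma~\ref{lem:prow} cannot be used in the sufficiency direction: its hypothesis is precisely that $|x_i|=1$ for all $i$, which is what you are trying to establish. In the paper it is used only for \emph{necessity} of~(2): after applying it (iterated to depth $\nu_\pp(\varepsilon)$) one row of $X$ is divisible by $\pp^{\nu_\pp(\varepsilon)}$ while the columns still have content~$1$, so in the reduction over $F=R/(\pp)$ no column can point in the $e_1$-direction, and hence the reduced columns miss at least one point of $\PP(F^k)$. This is the contradiction you were ``expecting'' but did not supply; note also that you have the dictionary backwards: $\nu_\pp(p_{i_1,\ldots,i_{k-2},i,j})=\nu_\pp(\varepsilon)$ for some choice of the $i_r$ means the reduced columns $\bar x_i,\bar x_j$ are \emph{distinct} projective points, not that they are ``maximally dependent''.

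For sufficiency, your proposed repair step---``adding suitable $R$-combinations of the other columns''---would alter the Pl\"ucker coordinates and is not allowed; only row (i.e.\ base-change) operations preserve all $k\times k$ minors. The actual mechanism is different. One first passes to the rescaled specialization $p_{i_1,\ldots,i_k}/\varepsilon$ and observes that the representation $X'$ from Theorem~\ref{thm:realization}, with its first row divided by $\varepsilon$, already has columns of content~$1$ precisely because of condition~(1). Then for each prime $\pp_\ell$ one reduces $X'$ modulo $\pp_\ell$; condition~(2) says the reduced columns do not exhaust $\PP(F^k)$, so one can pick a direction $v\notin\bigcup_i\langle y_i\rangle$, perform row operations sending $v$ to a coordinate direction $e_{j_\ell}$, and thereby ensure every column has a unit entry off row $j_\ell$. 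After combining the row operations across the primes by the Chinese Remainder Theorem, one \emph{multiplies} the $j_\ell$-th row by $\pp_\ell^{d_\ell}$; this reinstates the factor $\varepsilon$ in all minors (recovering $\psi$ rather than $\psi/\varepsilon$) while leaving in each column a unit entry, so the columns end with content~$1$. Your outline never divides by $\varepsilon$, never locates the missing projective direction, and never multiplies a row back up---these three moves are the heart of the proof.
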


\begin{proof}
Let $\delta_i\in R$
be such that
\[ (\delta_i)=(p_{i,i_2,\ldots,i_k} \mid 1\le i_2 < \ldots < i_k \le n). \]
Since $\delta_1\in (\varepsilon)$,
the representation $X$ constructed in (1) may be used to define a matrix
\[ X' = (x'_1\cdots x'_n) = \begin{pmatrix}
\frac{\delta_1}{\varepsilon} & \frac{a_2}{\varepsilon} & \cdots & \frac{a_n}{\varepsilon} \\
0 & & & \\
\vdots & & \tilde X & \\
0 & & & \\
\end{pmatrix} \in R^{k\times n} \]
such that $|x'_{i_1}\cdots x'_{i_k}| = p_{i_1,\ldots,i_k}/\varepsilon$ for all $i_1<\ldots<i_k$.

We first show that $\psi$ has a representation $X$ with $|x_i|=1$ for all $i$ if the conditions (1) and (2) hold.
Then
by assumption, $(\delta_i)=(\varepsilon)$, so each column vector of $X'$ has volume $1$.
Indeed, $|x'_i|$ divides each $p_{i,i_2,\ldots,i_k}/\varepsilon$ and thus also $\delta_i/\varepsilon=1$.
Moreover, starting with the $k$-th column, the entry in the last row is nonzero since $p_{1,2,\ldots,k-1,i}\ne 0$ for all $i>k-1$.

Let $\pp:=\pp_\ell \in R$ be one of the primes in the decomposition of $\varepsilon$, $F:=R/(\pp)$,
and $Y\in F^{k\times n}$ be the matrix 
\[ Y_{i,j} := X'_{i,j} + (\pp), \quad 1\le i \le k, \:\: 1\le j \le n. \]
We count the number of different subspaces $\langle y_i\rangle\le F^k$, where $y_1,\ldots,y_n$ are the columns of $Y$.
Note that since $Y$ has rank $k$,
$\langle y_i\rangle\ne \langle y_j\rangle$ for $1\le i<j\le n$ if and only if
there exist $i_1,\ldots,i_{k-2}$ with $p_{i_1,\ldots,i_{k-2},i,j}/\varepsilon\notin (\pp)$.
Equivalently, $\langle y_i\rangle\ne \langle y_j\rangle$ if and only if
$\nu_\pp(p_{i_1,\ldots,i_{k-2},i,j})= \nu_\pp(\varepsilon)$
for some $i_1,\ldots,i_{k-2}$.
We see that condition (2) is satisfied if and only if
$F^k\setminus \cup_{i=1}^n \langle y_i\rangle\ne \emptyset$.

If $F^k\setminus \cup_{i=1}^n \langle y_i\rangle\ne \emptyset$, then we may choose
$0\ne v\in F^k\setminus \cup_{i=1}^n \langle y_i\rangle$.
Write $v=(\mu_1,\ldots,\mu_k)$ and let $j$ be minimal with $\mu_j\ne 0$.
Possibly replacing $v$ by $v/\mu_j$ we may assume that $\mu_j=1$.
Let
\[ \tilde Y_{m,i} := Y_{m,i}-\mu_m Y_{j,i} = \begin{cases}
Y_{m,i} & m\le j, \\
Y_{m,i}-\mu_m Y_{j,i} & m> j.
\end{cases} \]
In $\tilde Y$, each column contains at least one nonzero entry in a row other than the $j$-th since
otherwise, in a column $i$ we would have $Y_{m,i}=0$ for $m<j$ and $Y_{m,i}=\mu_m Y_{j,i}$ for $m>j$ or
equivalently $y_i = Y_{j,i}\cdot v$ which is excluded by the choice of $v$.

Choose $\mu^{(\ell)}_{r,s}\in R$ such that
$\mu^{(\ell)}_{m,j}+(\pp) =\mu_m$ for $m>j$ and $\mu^{(\ell)}_{r,s}=0$ for $r\le j$ or $s\ne j$;
moreover, write $j_\ell:=j$.
Then
$$X''_{m,i}:=X'_{m,i}-\sum_s \mu^{(\ell)}_{m,s} X'_{s,i}$$
defines a matrix $X''$ in which each column contains at least one entry not divisible by $\pp_\ell$ in a row which is not the $j_\ell$-th one (for the same reason as before).

The chinese remainder theorem yields $\mu_{r,s}\in R$ with
\[ \mu_{r,s} + (\pp_\ell) = \mu^{(\ell)}_{r,s} + (\pp_\ell) \]
for all $\ell=1,\ldots,e$.
We now define
$$ X'''_{m,i}:=X'_{m,i}-\sum_s \mu_{m,s} X'_{s,i}. $$
Finally we obtain a matrix $Z$ from $X'''$ by multiplying each $j$-th row
by $\pp_\ell$ for all $\ell$ with $j=j_\ell$.
This matrix $Z$ is a representation of $\psi$ and the columns of $Z$ have volume $1$
because for each prime $\pp_\ell$ and each column $i$, there is at least one entry in this column not divisible by $\pp_\ell$.

For the converse, assume that $\psi$ has a representation $X$ with $|x_i|=1$ for all $i$.
In this case Proposition \ref{gcd_cond} tells us that (1) is satisfied.
For (2), let $\pp$ be one of the primes in the decomposition of $\varepsilon$.
By Lemma \ref{lem:prow}, up to replacing $X$ by a matrix with the same properties,
without loss of generality the first row is divisible by $\pp^{\nu_\pp(\varepsilon)}$.
Thus we may again consider the matrices $X'$ and $Y$ as before.
By the previous discussion, it suffices to show
$F^k\setminus \cup_{i=1}^n \langle y_i\rangle\ne \emptyset$.
But this holds by construction of $X$ (Lemma \ref{lem:prow}) since $\langle e_1\rangle\ne \langle y_i\rangle$ for all $i$ if
$e_1$ is the first standard basis vector.
\end{proof}

We close this section with the observation that $n$ is bounded by $k$ and the prime $\pp$ when all
$\pp$-valuations are equal.

\begin{propo}\label{n2P}
Let $\DR$ be a principal ideal domain,
$\pp\in R$ be a prime,
and $0\ne p_{i_1,\ldots,i_k}\in R$, $1\le i_1<\cdots<i_k\le n$ be a specialization of $\AC_{k,n}$.
Assume that $\nu_\pp(p_{i_1,\ldots,i_k})=m$ for all $1\le i_1<\ldots<i_k\le n$.
Then
$$\binom{n}{k-1}\le |\PP (R/(\pp))^k|.$$
\end{propo}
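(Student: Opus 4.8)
The plan is to reduce, by an integral change of basis at $\pp$, to the classical situation of points in general position over the residue field $F:=R/(\pp)$, where the bound becomes a simple injectivity statement. If $F$ is infinite the inequality is vacuous, so I assume $F$ is finite.

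First I would invoke Theorem \ref{thm:realization} to fix a representation $X=(x_1\cdots x_n)\in R^{k\times n}$ of the specialization; since all maximal minors $p_{i_1,\ldots,i_k}$ are nonzero, any $k$ of the columns are linearly independent over $K$. Localizing at $\pp$, the column lattice $L:=\sum_j R_\pp x_j\subseteq R_\pp^k$ is free of rank $k$ over the discrete valuation ring $R_\pp$.

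Next I would choose an $R_\pp$-basis $b_1,\ldots,b_k$ of $L$, let $B\in R_\pp^{k\times k}$ be the matrix with these columns, and set $D:=B^{-1}\in\GL_k(K)$. Writing $C:=DX$, so that $X=BC$, the columns of $C$ generate $R_\pp^k$, hence some maximal minor of $C$ is a unit; combined with $p_{i_1,\ldots,i_k}=\det(B)\det(C_{\{i_1,\ldots,i_k\}})$ this forces $\nu_\pp(\det B)=m$, and therefore \emph{every} maximal minor of $C=DX$ has $\pp$-valuation $m-m=0$. The point of this rescaling is exactly to cancel the common valuation $m$ shared by all Pl\"ucker coordinates; reducing $X$ itself modulo $\pp$ would instead collapse the configuration, since its rank drops as soon as $m\ge 1$. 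Reducing $C$ modulo $\pp$ thus produces vectors $z_1,\ldots,z_n\in F^k$ whose maximal minors are all nonzero, i.e.\ any $k$ of them are linearly independent.

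Finally I would map each $(k-1)$-subset $S\subseteq\{1,\ldots,n\}$ to the hyperplane $W_S:=\langle z_j\mid j\in S\rangle\le F^k$, which has dimension $k-1$ by general position. This map is injective: if $W_S=W_{S'}$ for $S\ne S'$, then the at least $k$ vectors $z_j$ with $j\in S\cup S'$ would all lie in the $(k-1)$-dimensional space $W_S$, contradicting that any $k$ of the $z_j$ are independent. Since the number of hyperplanes of $F^k$ equals $|\PP(R/(\pp))^k|$, this yields $\binom{n}{k-1}\le |\PP(R/(\pp))^k|$. I expect the only delicate point to be the construction of $D$: one must check that the common factor $\pp^m$ can be divided out by a base change that is still integral at $\pp$, which is guaranteed precisely by the index computation $\nu_\pp(\det B)=m$.
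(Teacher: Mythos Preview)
Your proof is correct and follows essentially the same strategy as the paper: produce a matrix over the residue field $F$ whose columns are in general position, then map each $(k-1)$-subset to the hyperplane it spans and check injectivity. The paper reaches this matrix more cheaply by noting that the Pl\"ucker relations are homogeneous, so $\tilde p_{i_1,\ldots,i_k}:=p_{i_1,\ldots,i_k}/\pp^m$ is again a specialization of $\AC_{k,n}$ with values in $R$ of $\pp$-valuation zero, which one then represents via Theorem~\ref{thm:realization} and reduces modulo $\pp$---no localization or lattice-index computation is required.
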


\begin{proof}
Assume that $\nu_\pp(p_{i_1,\ldots,i_k})=m$ for all $1\le i_1<\ldots<i_k\le n$.
Let
$$\tilde p_{i_1,\ldots,i_k}:=\frac{p_{i_1,\ldots,i_k}}{\pp^m}\in R \quad \text{for}\quad 1\le i_1<\ldots<i_k\le n. $$
Note that since the Pl\"ucker relations are homogeneous polynomials, the $\tilde p_{i_1,\ldots,i_k}$ are a specialization as well.
Let $X$ be a representation of this specialization, for example the one constructed in the proof of Theorem \ref{thm:realization}.

With $F:=R/(\pp)$,
let $Y=(y_1\cdots y_n)\in F^{k\times n}$ be the matrix with entries $Y_{i,j}:=X_{i,j}+(\pp)$.
By assumption, $|y_{i_1}\cdots y_{i_k}|\ne 0$ for all $1\le i_1<\ldots<i_k\le n$ since $\nu_\pp(\tilde p_{i_1,\ldots,i_k})=0$.

Assume that
$\langle y_{i_1},\ldots, y_{i_{k-1}}\rangle =
\langle y_{j_1},\ldots, y_{j_{k-1}}\rangle\le F^k$
for $1\le i_1<\ldots<i_{k-1}\le n$ and $1\le j_1<\ldots<j_{k-1}\le n$.
Then
$y_{j_\ell}\in \langle y_{i_1},\ldots, y_{i_{k-1}}\rangle$ for all $\ell$ which is
only possible if $\{y_{j_1},\ldots, y_{j_{k-1}} \}=\{y_{i_1},\ldots, y_{i_{k-1}}\}$.
Thus we obtain $\binom{n}{k-1}$ different hyperplanes in $F^k$, hence
$\binom{n}{k-1}\le |\PP F^k|$.
\end{proof}

\section{Cluster structure}\label{sec:clu}

The algebra $\AC_{k,n}$ has the structure of a cluster algebra \cite[Thm.\ 3]{MR2205721}. For most values of $k,n$, there are infinitely many cluster variables. However, at least the elements
$t_{i_1,\ldots,i_k}$ for $1\le i_1<\ldots<i_k\le n$
are cluster variables. Those with labels $[i,i+1,\ldots,i+k-1]$ \footnote{Recall that we view the labels as elements in $\ZZ/n\ZZ$ and that $[\cdots]$ is the ordered sequence.} are usually chosen to be the frozen variables.

Given an initial cluster, all other cluster variables are rational functions of these initial cluster variables \cite{MR1887642}.
Because of the Laurent phenomenon \cite{MR1888840}, the denominators of these functions are monomials.
Moreover, if $\KR$ is an ordered field and if we specialize a cluster to positive values in $\KR$, then the values of all cluster variables will be positive \cite{MR3374957}.

\begin{examp}\label{ex:slk}
If $\DR=\ZZ$, $\KR=\QQ$, then specializing all variables in a cluster to $1$ implies that all cluster variables are positive integers.
In this case we obtain an $\SL_k$-frieze pattern:
\[
\begin{array}{ccccccccccccccccccccc}
& & & \ddots \\
1 & 0 & \cdots & 0 & 1 & p_{1,\ldots,k-1,k+1} & p_{1,\ldots,k-1,k+2} & \cdots \\
 & 1 & 0 & \cdots & 0 & 1 & p_{2,\ldots,k,k+2} & p_{2,\ldots,k,k+3} & \cdots \\
& & 1 & 0 & \cdots & 0 & 1 & p_{3,\ldots,k+1,k+3} & p_{3,\ldots,k+1,k+4} & \cdots \\
& & & & & & & \ddots
\end{array}
\]
\end{examp}

There is a distinguished set of clusters which corresponds to maximal non-crossing collections of $k$-subpolygons in the regular $n$-gon:

\begin{defin}[{\cite[Def.\ 3]{MR2205721}}]
Two subsets $I$ and $J$ in $\{1,\ldots,n\}$ are said to be \df{non-crossing} if no chord in
the regular $n$-gon having end points labelled by elements from $I\setminus J$ crosses any chord with
end points labelled by elements from $J\setminus I$.
\end{defin}

A maximal collection of non-crossing $k$-subpolygons $I_1,\ldots,I_{k(n-k)+1}$ in an $n$-gon
corresponds to the cluster $\{ t_{I_1},\ldots,t_{I_{k(n-k)+1}}\}$.
Such a cluster may be visualized by a Postnikov arrangement which in turn can be used to obtain a quiver.
This quiver then produces all (usually infinitely many) clusters after quiver mutations.
For example, only $\AC_{3,6}$, $\AC_{3,7}$, and $\AC_{3,8}$ are of finite type if $2<k\le \frac{n}{2}$ (\cite[Thm.\ 5]{MR2205721}).

\begin{examp}
If $k=3$ and $n=7$, the following collection is non-crossing:
\[
\{1,2,3\},\{1,2,5\},\{1,2,7\},\{1,5,7\},\{1,6,7\},\{2,3,4\},\{2,3,5\},
\]
\[
\{2,4,5\},\{2,5,6\},\{2,5,7\},\{3,4,5\},\{4,5,6\},\{5,6,7\}
\]
Specializing this cluster to $1$ gives the $\SL_3$-frieze pattern (see Example \ref{ex:slk}):
\[
\begin{array}{rrrrrrr}
0 & 0 & 1 & 2 & 1 & 3 & 1\\
1 & 0 & 0 & 1 & 1 & 4 & 2\\
5 & 1 & 0 & 0 & 1 & 6 & 5\\
3 & 1 & 1 & 0 & 0 & 1 & 2\\
2 & 1 & 2 & 1 & 0 & 0 & 1\\
1 & 2 & 8 & 7 & 1 & 0 & 0\\
0 & 1 & 5 & 5 & 1 & 1 & 0
\end{array}
\]
\end{examp}

\section{Crystallographic arrangements from clusters}\label{sec:cry}

If we specialize all variables of a cluster to $\pm 1$ instead of $1$, by the Laurent phenomenon we still
obtain a specialization to the ring of integers since all denominators of cluster variables are specialized to $\pm 1$. However, some of the variables will possibly be specialized to $0$.
A representation of this specialization thus also represents the matroid defined by these linear dependencies.

Depending on the choice of a cluster and of values $\pm 1$, the resulting matroids can have
very special properties.
For example, we obtain the reflection arrangements of type $A_3$ and $B_3$:

\begin{examp}
Let $k=3$, $n=6$ and choose the cluster consisting of all triangles $[i,i+1,i+2]$ (frozen variables) and of the following non-crossing sets:
\[ \{1,2,4\}, \{1,2,5\}, \{1,3,4\}, \{1,4,5\}  \]
Specializing all variables in this cluster to $1$ except $\{1,4,5\}$ which we map to $-1$, we obtain as a representation:
\[
\begin{pmatrix}
0 & 1 & 1 & 0 & 1 & 0\\
0 & 0 & 1 & 1 & 1 & 1\\
1 & 0 & 0 & 1 & 1 & 0
\end{pmatrix}.
\]
The columns are the positive roots of the root system of type $A_3$.
This may also be viewed as a sort of $\SL_3$-frieze pattern in which the entries are not necessarily positive.
\end{examp}

\begin{examp}
Let $k=3$, $n=9$ and choose the cluster consisting of all the frozen variables and of
\[
\{1,2,4\},
\{1,2,8\},
\{1,4,8\},
\{2,4,5\},
\{2,4,8\},
\{4,5,7\},
\{4,5,8\},
\{4,7,8\},
\{4,8,9\},
\{5,7,8\}.
\]
We specialize the frozen variables to $1$ and the other variables of the cluster to
\[ 1, 1, -1, 1, 1, -1, 1, 1, -1, -1. \]
Then a representation is
\[
\begin{pmatrix}
0 & 1 & 1 & 0 & 0 & 1 & -1 & 1 & 0\\
0 & 0 & 1 & 1 & -1 & 2 & -1 & 1 & 1\\
1 & 0 & 0 & 1 & 0 & 2 & -1 & 2 & 2
\end{pmatrix}
\]
which consists of $\alpha$ or $-\alpha$ for each positive root $\alpha$ of the root system of type $B_3$.
\end{examp}

It is not obvious what cluster and which values are required to obtain reflection arrangements in general.

\begin{question}
What is the structure of a cluster that produces a Weyl arrangement when specializing the variables to the right values $\pm 1$?
\end{question}

An explanation for the fact why some Weyl arrangements appear as specializations is that the volume of their chambers is $1$. Thus those $k$-gons which correspond to chambers are good candidates for frozen variables.
In particular, it is reasonable to look for clusters for which all frozen variables correspond to chambers.

\begin{examp}
For $k=3$, $n=16$, the following sets are non-crossing:
\begin{eqnarray*}
\{1,3,4\},\{1,3,14\},\{1,3,16\},\{1,14,16\},\{3,4,6\},\{3,4,8\},\{3,4,14\},\{3,14,15\}, \\
\{3,14,16\},\{4,6,7\},\{4,6,8\},\{4,7,8\},\{4,8,9\},\{4,8,10\},\{4,8,12\},\{4,8,14\}, \\
\{4,10,12\},\{4,12,13\},\{4,12,14\},\{4,13,14\},\{4,14,15\},\{8,10,12\},\{9,10,12\},\{10,12,13\}
\end{eqnarray*}
If we specialize these to
\[
1, 1, 1, 1, 1, -1, 1, -1, 1, -1, 1, 1, 1, -1, 1, -1, -1, 1, -1, -1, 1, 1, -1, -1
\]
and the frozen variables to $1$, then we obtain
\[
\left(
\begin{array}{rrrrrrrrrrrrrrrr}
0 & 1 & 1 & 0 & 0 & 1 & 1 & 0 & -1 & 1 & 1 & -1 & -2 & 1 & 1 & 0\\
0 & 0 & 1 & 1 & -1 & 1 & 1 & -1 & -1 & 2 & 3 & -2 & -3 & 2 & 2 & 1\\
1 & 0 & 0 & 1 & 0 & 1 & 2 & -2 & -3 & 4 & 4 & -3 & -4 & 2 & 1 & 3
\end{array}
\right)
\]
as a representation.
The columns define a simplicial arrangement denoted $\AC(16,3)$ by Gr\"unbaum \cite{MR2485643}.
It is also a crystallographic arrangement corresponding to the $13$-th finite Weyl groupoid of rank three in the numbering of \cite{MR3341467}.
\end{examp}

\section{Subpolygons in $\SL_k$-friezes}\label{sec:slk}

In this section, we obtain a condition for subpolygons in $\SL_k$-friezes in general.
Using Theorem \ref{thm:main}, it is easy to recover the previous results \cite[Thm.\ 3.2]{MR4311087} and \cite[Thm.\ 3.2]{MR4311087} from the introduction ($k=2$), since the Euclidean algorithm provides the missing vectors.

\begin{figure}
\begingroup%
  \makeatletter%
  \providecommand\rotatebox[2]{#2}%
  \newcommand*\fsize{\dimexpr\f@size pt\relax}%
  \newcommand*\lineheight[1]{\fontsize{\fsize}{#1\fsize}\selectfont}%
  \ifx\svgwidth\undefined%
    \setlength{\unitlength}{270bp}%
    \ifx\svgscale\undefined%
      \relax%
    \else%
      \setlength{\unitlength}{\unitlength * \real{\svgscale}}%
    \fi%
  \else%
    \setlength{\unitlength}{\svgwidth}%
  \fi%
  \global\let\svgwidth\undefined%
  \global\let\svgscale\undefined%
  \makeatother%
  \begin{picture}(1,0.67734383)%
    \lineheight{1}%
    \setlength\tabcolsep{0pt}%
    \put(0,0){\includegraphics[width=\unitlength,page=1]{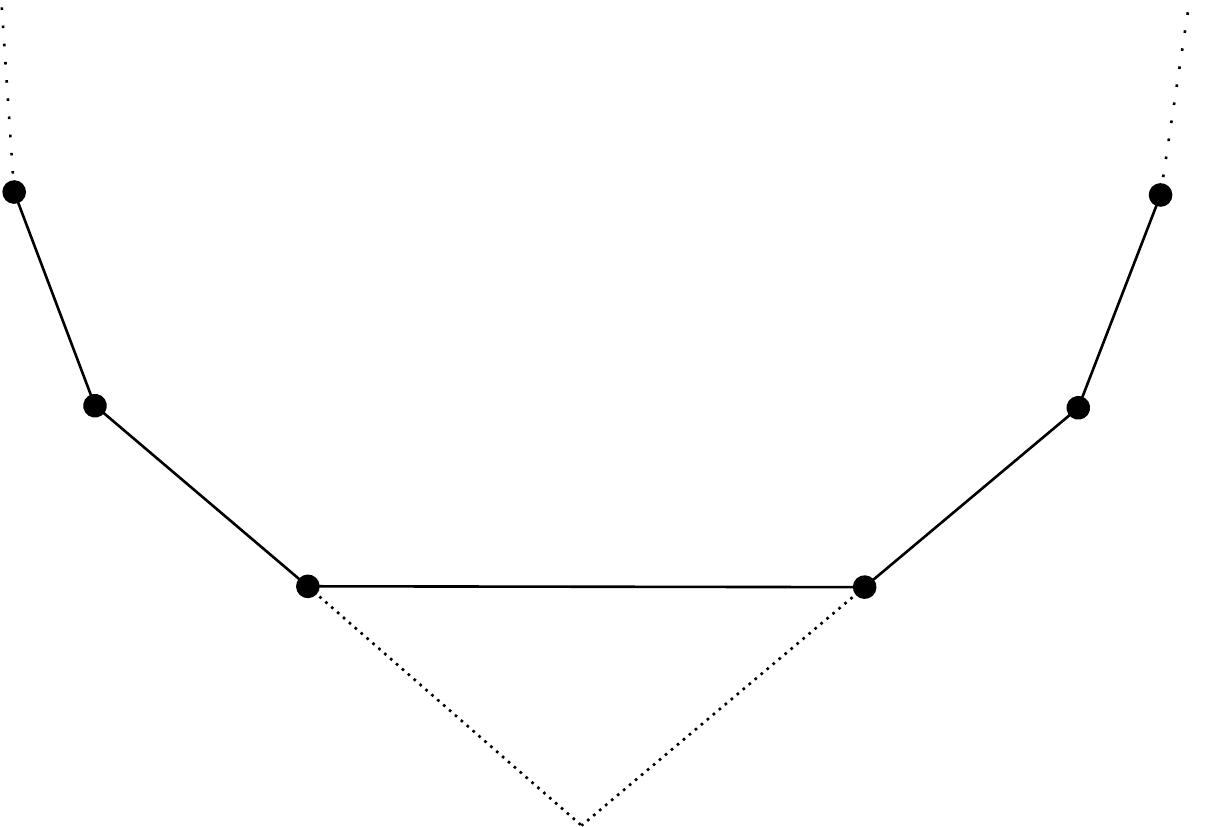}}%
    \put(0.01833648,0.30176774){\color[rgb]{0,0,0}\makebox(0,0)[lt]{\lineheight{1.25}\smash{\begin{tabular}[t]{l}$x_1$\end{tabular}}}}%
    \put(0.19660924,0.1527761){\color[rgb]{0,0,0}\makebox(0,0)[lt]{\lineheight{1.25}\smash{\begin{tabular}[t]{l}$x_2$\end{tabular}}}}%
    \put(0.72707451,0.16291272){\color[rgb]{0,0,0}\makebox(0,0)[lt]{\lineheight{1.25}\smash{\begin{tabular}[t]{l}$x_3$\end{tabular}}}}%
    \put(0.90035256,0.30920126){\color[rgb]{0,0,0}\makebox(0,0)[lt]{\lineheight{1.25}\smash{\begin{tabular}[t]{l}$x_4$\end{tabular}}}}%
    \put(0.31939667,0.07244187){\color[rgb]{0,0,0}\makebox(0,0)[lt]{\lineheight{1.25}\smash{\begin{tabular}[t]{l}$\alpha_1$\end{tabular}}}}%
    \put(0.46058466,0.2209843){\color[rgb]{0,0,0}\makebox(0,0)[lt]{\lineheight{1.25}\smash{\begin{tabular}[t]{l}$\alpha_2$\end{tabular}}}}%
    \put(0.59196844,0.06647312){\color[rgb]{0,0,0}\makebox(0,0)[lt]{\lineheight{1.25}\smash{\begin{tabular}[t]{l}$\alpha_3$\end{tabular}}}}%
    \put(0.49050009,0.13411864){\color[rgb]{0,0,0}\makebox(0,0)[lt]{\lineheight{1.25}\smash{\begin{tabular}[t]{l}$y$\end{tabular}}}}%
    \put(0,0){\includegraphics[width=\unitlength,page=2]{cone.pdf}}%
  \end{picture}%
\endgroup%
\caption{Cone in the proof of Theorem \ref{thm:slk} for $k=3$ (projective plane).\label{fig:cone}}
\end{figure}

The general case (arbitrary $k$) is much more difficult. The idea is to inductively include elements to a given sequence by considering a simplicial cone of volume greater than $1$.
One has to choose these new elements close to the border in order to decrease the values of the frozen variables:

\begin{theor}\label{thm:slk}
Let $x_1,\ldots,x_n\in \ZZ^k$ be with $|x_i|=1$ for all $i=1,\ldots,n$.
Assume that $|x_{i_1}\ldots x_{i_k}|> 0$ for all $1\le i_1<\ldots <i_k\le n$.
Then there exist $\yy_1,\ldots,\yy_\ell\in\ZZ^k$, $\ell\ge n$ and $i_1<\ldots<i_n$ with
\[ \yy_{i_r}=x_r,
\quad |\yy_{i_1}\ldots \yy_{i_k}|>0,
\quad \text{and} \quad
|\yy_{[i,i+1,\ldots,i+k-1]}|=1 \]
for all $r=1,\ldots,n$, $i_1<\ldots<i_k$, and $i=1,\ldots,\ell$, where
$\yy_{[i,i+1,\ldots,i+k-1]}$ denotes the matrix with columns $\yy_{i},\ldots,\yy_{i+k-1}$
sorted by the reduced indices.\\
In other words, the ``frieze with coefficients'' may be extended to an $\SL_k$-frieze pattern.
\end{theor}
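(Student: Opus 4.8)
The plan is to build the $\SL_k$-frieze pattern by inserting new vectors $\yy$ between consecutive given vectors (cyclically) until every window of $k$ consecutive vectors spans a parallelotope of volume $1$. Starting from the cyclic sequence $x_1,\ldots,x_n$, consider a window $\yy_{i},\ldots,\yy_{i+k-1}$ of $k$ consecutive vectors with volume $V:=|\yy_{i}\cdots\yy_{i+k-1}|>1$. The key geometric idea, illustrated in Figure \ref{fig:cone}, is that these $k$ vectors span a simplicial cone in $\RR^k$, and since its volume exceeds $1$ there must exist a lattice point in the interior of the cone (or on a proper face) that is primitive, i.e.\ has $|\yy|=1$, and that we can insert as a new vector $y$ to subdivide the cone into smaller cones. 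Each subcone has strictly smaller volume than $V$, so inserting $y$ strictly decreases the relevant frozen-variable volumes; a suitable termination/descent argument then drives all window-volumes down to $1$.

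The main steps I would carry out are as follows. First I would reduce to a local statement about a single simplicial cone: given primitive vectors $\beta_1,\ldots,\beta_k\in\ZZ^k$ with $|\beta_1\cdots\beta_k|=V>1$, produce a primitive $y$ in the cone $\RR_{\ge 0}\beta_1+\cdots+\RR_{\ge 0}\beta_k$ such that replacing one generator by $y$ yields a cone of strictly smaller positive volume while preserving that all the new generators are primitive and that all $k$-subsets of the enlarged global sequence still have positive volume. To find such a $y$ one picks a lattice point of the half-open parallelotope spanned by $\beta_1,\ldots,\beta_k$ other than the origin — which exists precisely because the parallelotope has volume $V>1$, hence contains more than one lattice point — and writes $y=\sum_{r}\lambda_r\beta_r$ with $\lambda_r\in[0,1)$. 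Choosing $y$ close to a facet (the $\lambda_r$ small except for one) keeps $y$ primitive after dividing out its content and keeps the subdivided cones positively oriented; the volumes of the $k$ resulting subcones sum to $V$, so each is a positive integer strictly less than $V$. Second, I would package this into a global induction: order all current window-volumes, apply the local step to a window of maximal volume, and verify that the global sign conditions $|\yy_{i_1}\cdots\yy_{i_k}|>0$ are preserved (the inserted vector lies inside an existing positively-oriented cone, so every $k$-subset determinant stays positive). Iterating terminates because the multiset of window-volumes decreases in a well-founded order.

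The hard part will be the local step, specifically guaranteeing simultaneously that the inserted vector $y$ is primitive, that it lies in the correct cone so \emph{every} $k$-subset determinant of the full sequence remains positive (not just the one window being subdivided), and that the process genuinely terminates. Primitivity is not automatic for an arbitrary interior lattice point, so the choice ``close to the border'' must be made carefully: one wants a lattice point on or near a facet whose content can be divided out without leaving the cone. Controlling the effect on \emph{all} $k$-subsets — including those mixing the new vector with non-adjacent old vectors — is the most delicate bookkeeping, since inserting $y$ changes the cyclic adjacency structure and hence which triples become ``frozen''; here I expect to use that $y$ is a non-negative combination of two adjacent generators to argue that all the relevant determinants expand with non-negative coefficients and stay strictly positive. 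Once the local step is established with these three properties, the global descent is routine.
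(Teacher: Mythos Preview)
Your broad plan --- insert lattice points into the cyclic sequence to drive the consecutive $k$-volumes down to $1$ --- matches the paper's, but the cone you work in is the wrong one, and this is a genuine gap rather than a bookkeeping detail. You take $\beta_1,\ldots,\beta_k$ to be the $k$ vectors of a window with volume $V>1$ and look for a lattice point $y$ in $\sum_r \RR_{\ge 0}\beta_r$. But when $y$ is inserted between two consecutive positions, the $k$ new frozen windows containing $y$ are $(x_j,\ldots,x_{k-1},y,x_k,\ldots,x_{j+k-2})$ for $j=1,\ldots,k$; they involve $2k-2$ consecutive original vectors, not $k$. The positivity of these determinants cuts out a cone whose walls are the hyperplanes $\langle x_j,\ldots,x_{j+k-2}\rangle$, and for $k\ge 3$ this cone is \emph{not} yours --- it even lies on the opposite side of one of your facets. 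Concretely, for $k=3$, put $y=\lambda_1 x_1+\lambda_2 x_2+\lambda_3 x_3$ with $\lambda_i\ge 0$ between $x_2$ and $x_3$: the middle new window has determinant $|x_2\,y\,x_3|=\lambda_1\,|x_2\,x_1\,x_3|=-\lambda_1 V\le 0$, so strict positivity fails. Your hope that ``$y$ is a non-negative combination of two adjacent generators'' rescues this only when $k=2$.

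Your termination argument does not transfer either: the identity ``the $k$ subcone volumes sum to $V$'' concerns subdividing the single cone $\langle\beta_1,\ldots,\beta_k\rangle$, but inserting $y$ replaces $k-1$ old windows by $k$ new ones, and only one of the new windows is a subcone of the old one. The paper instead works dually: with $\alpha_j$ the normal to $\langle x_j,\ldots,x_{j+k-2}\rangle$ and $A$ the matrix with rows $\alpha_1,\ldots,\alpha_k$, the new frozen values are exactly the entries of $A\cdot y$, while $|\det A|=\prod_{j=1}^{k-1} p_{j,\ldots,j+k-1}$ is the product of the $k-1$ old frozen values being removed. Putting $A$ (or its primitive-row version) into Hermite normal form lets one choose $y$ with $0<(\alpha_j,y)\le a_{j,j}$, so the product of the $k$ new values is at most $|\det A|$; a short case analysis (when all $|\alpha_j|=1$) forces strict inequality. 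This yields descent on $D=\prod_i p_{i,\ldots,i+k-1}$, which is the correct induction measure.
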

\begin{proof}
Consider without loss of generality the first $2k-2$ elements $x_1,\ldots,x_{2k-2}$ (partly repeating if $2k-2>n$).
We want to understand what happens when a $y\in \ZZ^k$ is included between $x_{k-1}$ and $x_k$,
i.e.\ when replacing the sequence with
\[ x_1,\ldots,x_{k-1},y,x_k,\ldots,x_{2k-2}. \]
For $j=1,\ldots,k$ let $\alpha_j\in\ZZ^k$ be such that the $i$-th entry of $\alpha_j$ is the
$(k-1)\times(k-1)$ minor
of the $k\times(k-1)$-matrix $(x_j\cdots x_{j+k-2})$ to the rows $1,\ldots,\hat i,\ldots,k$.
Then after possibly multiplying $\alpha_j$ by $-1$,
\[ \langle x_j,\ldots,x_{j+k-1}\rangle = \alpha_j^\perp \quad \text{and} \quad
\left( \:(\alpha_j,y)>0 \quad \Longleftrightarrow \quad
|x_j \ldots x_{k-1}\: y\: x_k \ldots x_{j+k-2}|>0\right) \]
where $\perp, (\cdot,\cdot)$ denote the standard scalar product.
Thus the hyperplanes $\alpha_j^\perp$ are the walls of a simplicial cone containing solutions $y$ to our extension problem (see Figure \ref{fig:cone} for $k=3$).

Let $A$ be the matrix with rows $\alpha_1,\ldots,\alpha_k$.
As before, we write $p_{i_1,\ldots,i_k}:=|x_{i_1}\cdots x_{i_k}|$.
Then we see that
\begin{equation}\label{eq:acoord}
A \cdot (x_1\cdots x_k) =
\begin{pmatrix}
0 & \cdots & \cdots & 0 & p_{1,\ldots,k} \\
\pm p_{1,\ldots,k} & 0 & \cdots& \cdots  & 0 \\
\pm p_{1,3,\ldots,k+1} & \pm p_{2,\ldots,k+1} & 0 & \cdots \\
\vdots & & \ddots & \ddots \\
\pm p_{1,k,\ldots,2k-2} & \cdots & \cdots & \pm p_{k-1,\ldots,2k-2} & 0
\end{pmatrix}
\end{equation}
and hence
\[ \det(A) = \pm \prod_{i=1}^{k-1} p_{i,\ldots,i+k-1}.\]
We obtain a parallelotope whose volume is the product of the first $k$ frozen variables.

We prove the claim by induction over $D:=\prod_{i=1}^n p_{i,\ldots,i+k-1}$.
If $D=1$, then there is no need to include a $y$ since the frozen variables $p_{i,\ldots,i+k-1}$, $i=1,\ldots,n$ are already equal to $1$.
Assume that $D>1$, without loss of generality $\det(A)\ne \pm 1$.

We first prove the existence of $\yy_1,\ldots,\yy_\ell$ extending the sequence of $x_i$
such that
$$|\yy_{[i,i+1, \ldots,i+k-2]}|=1$$
for all $i=1,\ldots,\ell$.
Assume without loss of generality that some of the $|\alpha_i|=p_{i,\ldots,i+k-2}$ are greater than $1$ for $1\le i\le k$.
Let $\tilde A$ be the matrix with rows $\alpha_j/|\alpha_j|$.
After a base change on $\ZZ^k$, we may assume that $\tilde A$ is in Hermite normal form (on the columns), i.e.\ $\tilde A=(a_{i,j})\in\ZZ^{k\times k}$ with
\[ a_{i,j}=0 \:\:\text{for}\:\: i<j, \quad
\text{and}\quad 0\le a_{i,j}<a_{i,i} \:\:\text{for}\:\: i>j. \]
Since $\tilde A$ is upper-triangular, we find a $y\in\ZZ^k$ such that
\[ \tilde A \cdot y = z = (z_i)_i, \quad
0 < z_i \le a_{i,i} \:\: i=1,\ldots,k. \]
In particular,
$|x_j \ldots x_{k-1}\: y\: x_k \ldots x_{j+k-2}| = z_j \le a_{j,j}$
and thus
\[
\prod_{j=1}^k |x_j \ldots x_{k-1}\: y\: x_k \ldots x_{j+k-2}|
\le \det(\tilde A) < \det(A)
= \prod_{j=1}^{k-1} |x_j \ldots x_{j+k-1}|.
\]
Including $y$, the new sequence satisfies our assumptions:
the new frozen variables are positive since $z_j>0$ for all $j$;
the remaining variables are all positive because the new frozen variables extend
a previous cluster in which all variables were positive.
The product $D$ has decreased and we are finished by induction.

We may now assume without loss of generality that $|\alpha_i|=1$ for all $i=1,\ldots,k$, and moreover that $p_{1,\ldots,k}>1$. Since $\det(A)=\det(\tilde A)$ in this case, we have to be more careful with the choice of $y$.
Again, we choose a basis such that $A=\tilde A$ is in Hermite normal form.
We compute the coordinates of $x_1,\ldots,x_k$ with respect to this basis.
Recall (Equation \ref{eq:acoord}) that for $1\le i<k$,
\[ \alpha_1(x_i)=\cdots=\alpha_i(x_i)=0, \quad
\alpha_{i+1}(x_i)=(-1)^i p_{i,\ldots,i+k-1}. \]
Using $\alpha_1(x_k)=p_{1,\ldots,k}$, $\alpha_k(x_k)=0$,
the fact that $A$ is upper triangular, and $a_{1,1}=1$ (because $|\alpha_1|=1$), we get
\[
(x_1\cdots x_k) =
\begin{pmatrix}
0 & \cdots & \cdots & 0 & p_{1,\ldots,k} \\
\frac{p_{1,\ldots,k}}{a_{2,2}} & 0 & \cdots& \vdots  & * \\
* & -\frac{p_{2,\ldots,k+1}}{a_{3,3}} & \ddots & \vdots & \vdots \\
\vdots & \ddots & \ddots & 0 & *\\
* & \cdots & * & (-1)^k\frac{p_{k-1,\ldots,2k-2}}{a_{k,k}} & 0
\end{pmatrix},
\]
and this should be a matrix in $\ZZ^{k\times k}$.
So $p_{i,\ldots,i+k-1}$ is divisible by $a_{i+1,i+1}$ for all $i$.
On the other hand,
\[
\prod_{i=2}^k a_{i,i} = \det(A) = \pm \prod_{i=1}^{k-1} p_{i,\ldots,i+k-1},
\]
which is only possible if $a_{i+1,i+1}=p_{i,\ldots,i+k-1}$ for all $i$.
Now $0\le a_{2,1} < a_{2,2} = p_{1,\ldots,k}$ because $A$ is in Hermite normal form.
Since $p_{1,\ldots,k}>1$ and $|\alpha_2|=1$, $a_{2,1}=0$ is excluded.
As before we look for a $y\in\ZZ^k$ such that $A\cdot y=z$ has positive coordinates;
the form of $A$ allows a $y$ such that $0<z_i\le a_{i,i}$ for all $i$.
In particular, $a_{1,1}=1$, so we can choose $y_1=1$ and get $z_1=1$.
For $y_2$ we get the equation $a_{2,1} y_1 + a_{2,2} y_2 = z_2$, or
\[ a_{2,1} + y_2\cdot p_{1,\ldots,k} = z_2. \]
With $y_2=0$ we get $z_2=a_{2,1}<p_{1,\ldots,k}$.
Hence after choosing the remaining $y_i$,
\[ \prod_{j=1}^k |x_j \ldots x_{k-1}\: y\: x_k \ldots x_{j+k-2}|=\prod_{j=1}^k z_j < \prod_{j=1}^k a_{j,j} = \det(A). \]
Including $y$ in the sequence decreases the product $D$ of all $p_{i,\ldots,i+k-1}$;
moreover, the new sequence satisfies the assumptions ($z_j>0$ for all $j$ and these extend a previous cluster),
so we are finished by induction.
\end{proof}

\newcommand{\etalchar}[1]{$^{#1}$}
\providecommand{\bysame}{\leavevmode\hbox to3em{\hrulefill}\thinspace}
\providecommand{\MR}{\relax\ifhmode\unskip\space\fi MR }
\providecommand{\MRhref}[2]{%
  \href{http://www.ams.org/mathscinet-getitem?mr=#1}{#2}
}
\providecommand{\href}[2]{#2}

\end{document}